% XeLateX +bibtex+XeLatex 2 times+PDF

\documentclass[11pt]{article}

\usepackage[utf8]{inputenc}
\usepackage{fullpage}%{fullpage}%varioref,multicol
\usepackage{bm}
\usepackage[dvipsnames]{xcolor}
\usepackage{amsmath,amssymb}
\usepackage{amsthm}
\usepackage{wrapfig}
\usepackage{epsfig}
\usepackage{graphicx}
\usepackage{makeidx}
\usepackage{multicol}
\usepackage{tikz}
\usepackage{algorithm}
\usepackage{algpseudocode}
\usepackage{soul}

\usepackage{array,multirow,booktabs,bbm} % for fancier tables

\usepackage{algorithmicx}
\usepackage{algpseudocode}
\usepackage{epstopdf}
\usepackage{mathrsfs,mathtools}
\usepackage{xcolor}
\usepackage{enumitem}
\usepackage{comment}
\usepackage{hyperref}
%\usepackage{chngcntr}

%\usepackage{subeqn}
%\usepackage{crop}
%\crop

%\newcommand{\I}{{\mathbb I}}
%\newcommand{\refeq}[1]{(\ref{#1})}

\newcommand{\reff}[1]{{\rm (\ref{#1})}}

%\makeindex

% definitions used by included articles, reproduced here for
% educational benefit, and to minimize alterations needed to be made
% in developing this sample file.

%%%%%%%%%%%%%%%%%%%%%%%%%%%%%%%%%%%%%%%%%%%%%%%%%%%%%%%%%%%%%%%%%%%%%%%%
\graphicspath{{./Figs/}}
%\DeclareGraphicsExtensions{.jpg,.png,.mps,.pdf}
\DeclareGraphicsRule{*}{mps}{*}{}
%%%%%%%%%%%%%%%%%%%%%%%%%%%%%%%%%%%%%%%%%%%%%%%%%%%%%%%%%%%%%%%%%%%%%%%%

%Define shortcuts

\newcommand{\ve}{\varepsilon}          % varepsilon

\newcommand{\calL}{\mathcal{L}}

\newtheorem{theorem}{Theorem}[section]

\newtheorem{lemma}{Lemma}[section]
\theoremstyle{remark}
\newtheorem{remark}{Remark}[section]

\begin{document}
\graphicspath{{Figs/}}

\title{
Asymptotic analysis on charging dynamics for stack-electrode model of supercapacitors}

\author{
Lijie Ji\thanks{School of Mathematical Sciences, Shanghai Jiao Tong University, Shanghai 200240, China. Email: {\tt{sjtujidreamer@sjtu.edu.cn}}},~
\and
Zhenli Xu\thanks{School of Mathematical Sciences, MOE-LSC, CMA-Shanghai, and Shanghai Center for Applied Mathematics, Shanghai Jiao Tong University, Shanghai 200240, China. Email: {\tt{xuzl@sjtu.edu.cn}}},~
\and
Shenggao Zhou\thanks{
School of Mathematical Sciences, MOE-LSC, CMA-Shanghai, and Shanghai Center for Applied Mathematics, Shanghai Jiao Tong University, Shanghai 200240, China. Email: {\tt{sgzhou@sjtu.edu.cn}}.} 
}

\date{\empty}

\maketitle
\begin{abstract}
Supercapacitors are promising electrochemical energy storage devices due to their prominent performance in rapid charging/discharging rates, long cycle life, stability, etc. Experimental measurement and theoretical prediction on charging timescale for supercapacitors often have large difference.  
This work develops a matched asymptotic expansion method to derive the charging dynamics of supercapacitors with porous electrodes, in which the supercapacitors are described by the stack-electrode model. Coupling leading-order solutions between every two stacks by continuity of ionic concentration and fluxes leads to an ODE system, which is a generalized equivalent circuit model for zeta potentials, with the potential-dependent nonlinear capacitance and resistance determined by physical parameters of electrolytes, e.g., specific counterion valences for asymmetric electrolytes. Linearized stability analysis on the ODE system after projection is developed to theoretically characterize the charging timescale. The derived asymptotic solutions are numerically verified. Further numerical investigations on the biexponential charging timescales demonstrate that the proposed generalized equivalent circuit model, as well as companion linearized stability analysis, can faithfully capture the charging dynamics of symmetric/asymmetric electrolytes in supercapacitors with porous electrodes.

\end{abstract}

\begin{keywords}
Poisson--Nernst--Planck equations; Stack-electrode model; Equivalent circuit models; Matched asymptotic expansion; Charging timescale
\end{keywords}

\section{Introduction}

Supercapacitors, also known as electric double layer (EDL) capacitors, store ions in electric double layers near the electrode/electrolyte interface. Compared to planar electrodes, porous electrodes that accommodate dense packing of ions in pores are more often considered in supercapacitors, to enlarge the area of the interface per unit volume~\cite{xiong2019recent,largeot2008relation,
bi2020molecular,porada2013review,
biesheuvel2011theory,ortiz2022understanding}.  Supercapacitors have attracted considerable attention in recent years due to their excellent performance in rapid charging/discharging rates, long cycle life, stability, and high power density~\cite{chmiola2006anomalous,limmer2013charge,forse2017direct, prehal2018salt, niu2022conductive,choudhary2017asymmetric}. Such attractive features make them promising electrical energy storage devices with wide applications in regenerative braking, smart grids, and electric vehicles~\cite{pilon_jps14,Pilon_JPS_15, lian2020blessing, HuangLianLiu_AICHEJ22}.

Molecular dynamics simulations~\cite{KondratKornyshev_NatureMat14}, lattice Boltzmann simulations~\cite{AstaRotenberg_JCP19,basu2020fully}, mean-field theories, the Poisson--Nernst--Planck (PNP) theory~\cite{BTA:PRE:04,berg2011analytical}, etc., have been applied to understand the underlying molecular mechanism. For instance, Groda \textit{et al.} have mapped the theory of charging supercapacitors on the Bethe-lattice model to understand generic features of energy storage, e.g., the ion ordering inside a pore \cite{groda2021superionic}.
Bazant \textit{et al.} have analyzed the charging dynamics between two parallel electrodes systematically in the work \cite{BTA:PRE:04}, in which a time-dependent matched asymptotic expansion (MAE) method has been proposed for boundary-layer analysis. Other singular perturbation analysis has also been used to systematically understand the charge dynamics described by the PNP theory~\cite{WLiu_SIAP05}, investigate the charge transport and the structure of electric double layers with inhomogeneous dielectric permittivity~\cite{ji2018asymptotic}, and derive the current-voltage relations of electrochemical systems \cite{BazantChuBayly_SIAP06,ChuBazant:SIAP:2005,abaid2008asymptotic}. The classical transmission line (TL) model~\cite{de1963porous}  based on linearized electrolyte theories has been proposed in literature to describe charging dynamics. By fitting against experimental data, the TL model can successfully predict charging dynamics for general morphologies at low applied potentials~\cite{mirzadeh2014enhanced}.  A modified model that assumes a volume-averaged,  electroneutral bulk has been developed by Biesheuvel and Bazant (BB) to consider charging process at higher potentials, and predicts that the charging dynamics slows down due to nonlinear capacitance and salt depletion of the pore~\cite{biesheuvel2010nonlinear}. Direct numerical simulations of the full PNP equations in simple 2D porous structures reveal that the TL model underestimates and the BB model overestimates the charging timescale~\cite{mirzadeh2014enhanced}. To improve accuracy, a surface conduction mechanism, that can effectively short circuit of the high-resistance electrolyte in the bulk of the pores, has been further proposed based on the BB model.

Due to limitation of computational capacity, most of the aforementioned studies focused on nanoscale systems with simple geometry, such as parallel planar electrodes or coaxial cylindrical electrodes~\cite{BTA:PRE:04, AstaRotenberg_JCP19}. The predicted results on charging timescales are several orders of magnitude smaller than macroscopic experimental data, on account of the ignorance of the multiscale nature and porous structure in electrodes.  Although effective medium approximation or empirical fitting approaches could be used to bridge the gaps in scales, multiscale models that can faithfully capture the ion dynamics in charging processes of supercapacitors are still highly desirable. Recently, a multi-stack model has been proposed to mimic the porous structure of electrodes in supercapacitors~\cite{lian2020blessing}. Stacks that allow ionic penetration are 
parallelly placed in electrodes with applied voltages, so that the system can be reduced to one dimension. Classical equivalent circuit models and numerical simulations based on the PNP theory are employed to predict the charging timescale of electrodes with porous structures. Timescale analysis based on an ODE system for an equivalent circuit model with constant capacitance and resistance demonstrates that the timescale of charging increases linearly with the number of stacks, which represents the porosity of electrodes. Such a multiscale model that bridges the timescale gap between theories and experiments has been employed to study heat generation of porous electrodes, etc.~\cite{LianLiuRoij_PRL22, HuangLianLiu_AICHEJ22, TaoLianLiu_AICHEJ22}. {Aslyamov and Janssen have analytically derived the biexponential charging dynamics of a long electrolyte-filled slit pore in \cite{aslyamov2022analytical} and these analytical results are consistent with the experiment data in \cite{lian2020blessing}.  Henrique \textit{et al.} have  investigated the impact of asymmetric valences, diffusivities and arbitrary pore size on the charging dynamics inside a charged cylindrical pore. Their perturbation analysis is validated by the direct numerical simulations \cite{GuptaZukStone_PRL2020, henrique2022impact, henrique2022charging}. Further, they derived the MAE model of the electrochemical system with redox reactions in thin-double-layer limit \cite{jarvey2022ion}.   

In this work, we develop an MAE method to analyze the biexponential charging timescales 
for two-ion supercapacitors with the stack-electrode model. By extending the time-dependent matching approach~\cite{BTA:PRE:04}, we obtain an accurate leading-order approximation of the ion concentrations, potential distributions, and total diffuse charge in each region between stacks. By the continuity of concentration and ionic fluxes, we couple the leading-order approximations together to form an ODE system for the zeta potentials at all stacks.
It is of interest to note that the ODE system can be regarded as a generalized equivalent circuit model with potential-dependent nonlinear capacitance and resistance determined by the PNP theory and physical parameters of symmetric/asymmetric electrolytes, such as specific counterion valences. Furthermore, we develop linearized stability analysis on the ODE system after projection, and find the charging timescale by solving an eigenvalue problem. Asymptotic analysis on excess salt concentration indicates the existence of a longer diffusion timescale in late-time charging process.  Numerical simulations are first performed to validate the accuracy of the asymptotic solution. Further numerical studies on the dependence of charging timescale against the number of stacks (or porosity)  demonstrate that the derived generalized equivalent circuit model can faithfully capture the charging process of symmetric and asymmetric electrolyte systems.

The rest of this paper is organized as follows. In Section~\ref{sec:Model}, we review the parallel stack-electrode model and the Poisson-Nernst-Planck theory. In Section~\ref{sec:AsympSolutions}, the MAE method is employed to derive the leading-order approximation for the stack-electrode model. The generalized equivalent circuit model of each zeta potential, effective both for symmetric and asymmetric systems,  is then theoretically derived. Resorting to the linearized stability analysis and the projection technique, we obtain the generalized resistor-capacitor time scale of the supercapacitors with porous electrode in Section~\ref{sec:generalizedRCtime}. Simultaneously, the diffusion timescale can be  estimated from the evolution of salt concentration. In Section~\ref{sec:numerical:results}, the asymptotic solutions are verified by the finite difference method. Further numerical investigations on the biexponential charging timescales for two-ion system demonstrate that the proposed generalized equivalent circuit model through the MAE method can faithfully capture the charging dynamics of symmetric/asymmetric electrolytes. Finally, implications and concluding remarks are made in Section~\ref{sec:conclusion}.

\section{Parallel stack-electrode Poisson-Nernst-Planck model}\label{sec:Model}
Consider a binary electrolyte confined among multiple parallelly stacked planar electrodes of a total width $2{D}$, with $D = \mathcal{L}+H$, in a stack-electrode system~\cite{lian2020blessing}; cf. Fig.~\ref{f:model}. Assume that the system is homogeneous in the $y\hbox{-O-}z$ plane. The valences of the two ion species are denoted by $z_\pm$ with $z_-<0$ and $z_+>0$. 
The mean field electric potential $\Phi$ is determined by the Poisson equation with Dirichlet boundary conditions imposed on each electrode {at $x=\pm x_k$:}
\begin{eqnarray}
\begin{cases}
\displaystyle -\ve_0\nabla \cdot \varepsilon\nabla\Phi=\sum_{i=\pm} z_i e c_i,\\
\displaystyle \Phi(\pm x_k)= V_\pm, ~k=1, \cdots, n.
\end{cases}
\label{p}
\end{eqnarray}
Here $x_k=\mathcal{L}+(k-1)h$, $h=H/(n-1)$, $\ve_0$ is the vacuum permittivity, $\ve$ is the dielectric coefficient, $e$ is the elementary charge, and $c_\pm$ are ionic concentrations.
Under the mean field approximation, the evolution of ionic concentrations are described by the Nernst-Planck (NP) equations

\begin{equation}
\displaystyle \frac{\partial c_i(x,t)}{\partial t}+\nabla \cdot J_i = 0,~~i=\pm,
\label{ns}
\end{equation}
where $J_i=- \mathcal{D}_i [\nabla c_i+\beta c_i \nabla(z_i e \Phi)]$ is the flux density, $\beta=1/k_BT$ {is the inverse thermal energy}, and $\mathcal{D}_{\pm}$ are the diffusion constants of ions. {We} assume $\mathcal{D}_+=\mathcal{D}_-=\mathcal{D}_0$. Combination of~\reff{p} and~\reff{ns} leads to the coupled Poisson--Nernst--Planck (PNP) equations. To mimic porous electrodes, the ions in such a stack-electrode model are assumed to transport freely across all the parallel electrodes except the outmost one. {Therefore, the flux, as assumed in the stack-electrode model~\cite{lian2020blessing}, is set to be continuous across inner electrodes to model ion permeation through porous electrodes, and zero flux is specified at boundaries $x = \pm D$}. When there are only two parallel blocking electrodes, the dynamics and equilibrium properties of this coupled PNP equations have been extensively investigated~\cite{BTA:PRE:04, ji2018asymptotic}. However, such a stack-electrode model that can capture the main physics of porous electrodes has not been mathematically studied in literature. In this paper, the stack-electrode system is investigated by the MAE method, which derives a generalized equivalent circuit model with physical interpretations. Timescale analysis based on the generalized equivalent circuit model is performed to understand the charging dynamics of supercapacitors.
\begin{figure}[H]
\centering
\includegraphics[scale=0.45]{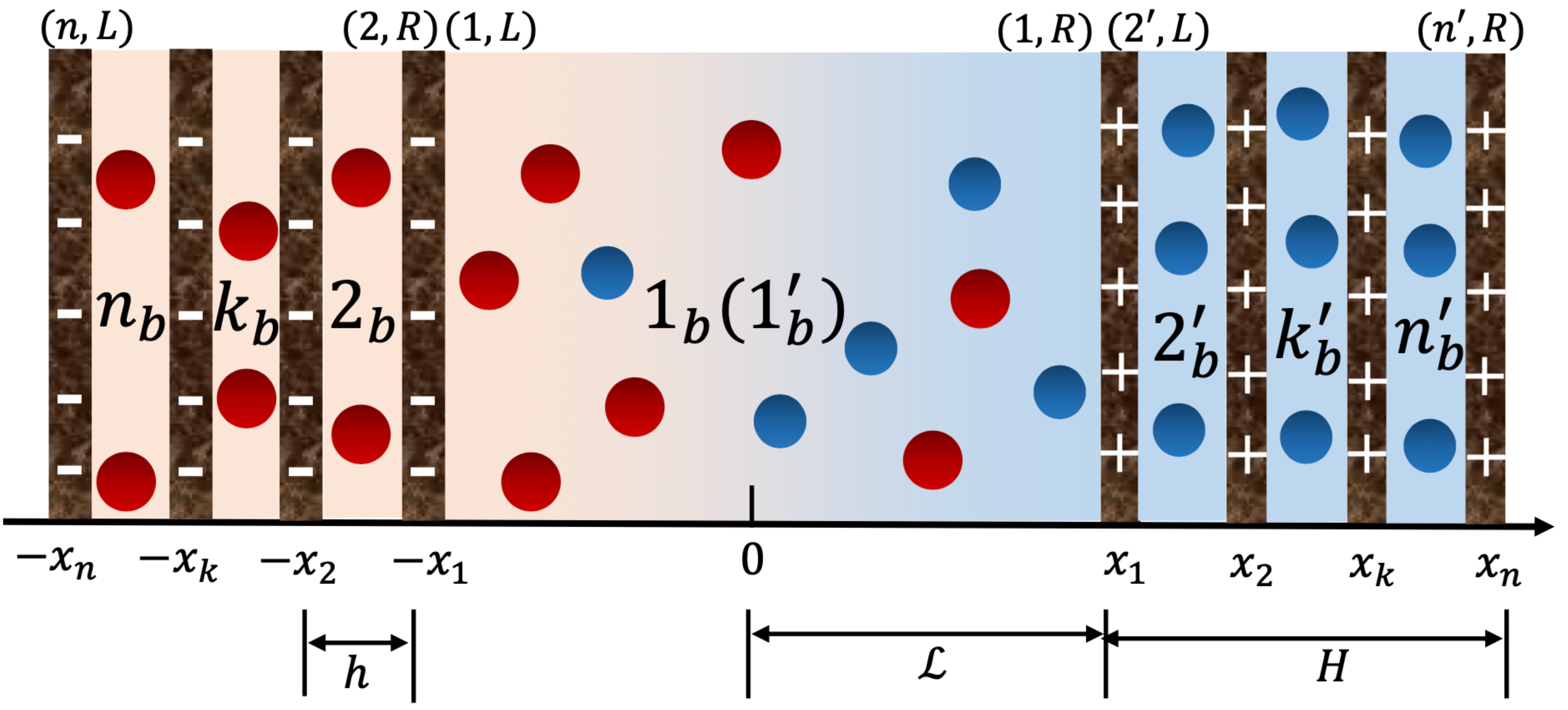}
\caption{Schematic illustration of the {stack-electrode} model system. The porous electrode regions of width $2H$ are described by $2n$ stack electrodes with applied voltages located at $x=\pm x_k, k=1, \cdots, n$ with a uniform spacing $h$. All the left sub-domains are labelled by $1, \cdots, n$ and right sub-domains by $2', \cdots, n'$. More specifically, $(k,L), (k,R), (k',L), (k',R)$, $k=1, \cdots, n$, represent the left EDL and right EDL layers of each left sub-domain and right sub-domain, respectively. The $k_b$ and $k'_b$, $k=1, \cdots, n$, represent the bulk region of each sub-domain. }
\label{f:model}
\end{figure}

We first define some reference variables and obtain the dimensionless form of the model equations. The initial ionic concentrations are assumed to be uniform and charge neutral: $c_\pm(x,0)=c_{\pm,b}=|z_\mp| c_0$ with $c_0$ being a reference concentration. Let $\ell_0= \sqrt{\ve_0\ve/\beta e^2 c_0}$ be a length scale which is proportional to the Debye length $\ell_D =\sqrt{ \ve_0\ve/(\sum_i \beta z_i^2 e^2 c_0)}$.
 Let  $\tau_c = \ell_0D/\mathcal{D}_0$ be the classical resistor-capacitor (RC) time. Introduce dimensionless quantities $\widetilde{x}=x/D$, $\widetilde{t}=t/\tau_c$,
$\widetilde{c}_{\pm}=c_{\pm}/c_0$, and
$\widetilde{\Phi}=\beta e\Phi$.
For ease of presentation, the tildes over all variables in the following discussions are dropped. Then the PNP system {reads}
\begin{eqnarray}
\left\{
\begin{aligned}
&\displaystyle \frac{\partial c_i}{\partial t}=\epsilon \frac{\partial}{\partial x}\left[ \frac{\partial}{\partial x}c_i+ c_i \frac{\partial}{\partial x}\left(z_i\Phi\right)\right],~~i=\pm, ~ -1<x<1, \label{NP1}
   \\
&\displaystyle -\epsilon^2\frac{\partial^2}{\partial x^2}\Phi= \rho,  -1<x<1,
\end{aligned}
\right.
\end{eqnarray}
where $\rho = z_{+}c_{+}+z_{-}c_{-}$. The initial and boundary conditions are
\begin{equation}
\begin{cases}
\displaystyle c_\pm(x,0)=|z_\mp|, \\
\displaystyle \Phi(\pm x_k ,t)=V_\pm,~ k=1, \cdots, n,\\
\displaystyle J_\pm(\pm 1,t)=0.
\end{cases}
\label{boundary}
\end{equation}
Here, $J_{\pm}=\partial_x c_\pm+ c_\pm \partial_x \left(z_\pm\Phi\right)$, is the dimensionless flux. Assuming that the Debye length is much smaller than the separation $D$ and thus $\epsilon=\ell_0/D$ is a small parameter. One shall perform matched asymptotic analysis based on $\epsilon$ to obtain the leading orders of ion concentrations, potential, total diffuse charge, and differential capacitance. Such leading-order solutions are able to provide a generalized equivalent circuit model with physical interpretations. 

\section{Asymptotic solutions}\label{sec:AsympSolutions}
\subsection{The $n=1$ case}
\label{sec:n1}
The method of MAE is first used to solve the initial-boundary value problem of the PNP equations with $n=1$, in which only two electrodes located at $x = \pm 1$ with $H=0$ being considered. 
The outer and inner solutions are first analyzed, and the composite solutions are then constructed through the time-dependent matching method.

\textbf{\textit{Outer solution}}--
When $\epsilon \to 0$, the NP equations become
\begin{equation}
\displaystyle \frac{\partial \overline{c}_{\pm,0}}{\partial t}=0,
\end{equation}
which implies the leading order concentration: $\overline{c}_{\pm,0}(x,t)=\overline{c}_{\pm,0}(x,0)= |z_\mp|$.
Here and afterwards, the subscript ``0" represents the leading term in the asymptotic expansion.
Summing two NP equations in Eq. \eqref{NP1} together and using the Poisson equation, one obtains
\begin{equation}
\displaystyle -\epsilon \frac{\partial^2}{\partial x^2} \frac{\partial}{\partial t} \Phi = -\epsilon^2 \frac{\partial^4}{\partial x^4}\Phi + \frac{\partial}{\partial x}\left[(z_+^2{c}_{+}+z_-^2{c}_{-})\frac{\partial \Phi}{\partial x}\right].
\end{equation}
{Let $\Phi = \overline{\Phi}_0 +\epsilon \overline{\Phi}_1+\cdots$.} Thus, the leading order term $\overline{\Phi}_0$ satisfies
\begin{align}
\displaystyle \frac{\partial}{\partial x}\left[(z_+^2\overline{c}_{+,0}+z_-^2\overline{c}_{-,0})\frac{\partial \overline{\Phi}_0}{\partial x}\right] =0.
\end{align}
Since $\overline{c}_{\pm,0}$ are constants, $\overline{\Phi}_0$ can be written as
$\overline{\Phi}_0(x,t)=\overline{j}_0(t)x+A(t)$, a linear function of $x$. $\overline{j}_0(t)$ is the leading term of the current
density in the outer region with an initial value
$\overline{j}_0(0)=(V_+-V_-)/2$, and $A(0)=(V_-+V_+)/2$. To summarize, the outer solution reads
\begin{equation}
\label{BulkSolution}
\left\{
\begin{aligned}
&\displaystyle \overline{c}_{\pm,0}(x,t)=|z_\mp|, \\
&\displaystyle \overline{\Phi}_0(x,t)=\overline{j}_0(t)x+A(t).
\end{aligned}
\right.
\end{equation}
In order to investigate the dynamics and equilibrium, one shall determine $\overline{j}_0(t)$ and $A(t)$ by matching with the inner solution.

\textbf{\textit{Inner solution}}--
\label{sec:inner:n1}
Due to electrostatic interactions, there is an EDL structure near each electrode. The width of the EDL (inner solution) is of several Debye length, i.e., $O(\epsilon)$ in dimensionless form, and the dynamics in this layer is different from that in the bulk (outer solution). The inner solution near the left interface $x=-1$ is illustrated and the right can be obtained similarly. Denote by $\widehat{c}_\pm(y,t)$ and $\widehat{\Phi}(y,t)$ the inner solutions, and $\widehat{\rho}= z_{+}\widehat{c}_{+}+z_{-}\widehat{c}_{-}$. With a scale transformation $y=(x+1)/\epsilon$, the original PNP equations become
\begin{equation}
\begin{cases}
\displaystyle \epsilon\frac{\displaystyle \partial \widehat{c}_i}{\displaystyle \partial t}=\frac{\displaystyle \partial }{\displaystyle \partial y}\left(\frac{\displaystyle \partial \widehat{c}_i}{\displaystyle \partial y}+\displaystyle z_i\widehat{c}_i \frac{\partial \displaystyle \widehat{\Phi}}{ \displaystyle \partial y}\right), ~i=\pm,\\
\displaystyle -\frac{\displaystyle \partial^2 \widehat{\Phi}}{\displaystyle \partial y^2}=\displaystyle \widehat{\rho}.
\end{cases}
\label{mpnp}
 \end{equation}
Let $\widehat{\Phi}= \widehat{\Phi}_{0} + \epsilon \widehat{\Phi}_{1} +\cdots$, $\widehat{c}_{i}= \widehat{c}_{i,0} + \epsilon \widehat{c}_{i,1} +\cdots$ and $\widehat{\rho}= \widehat{\rho}_{0} + \epsilon \widehat{\rho}_{1} +\cdots$. 
The leading-order terms in \eqref{mpnp} satisfy
\begin{equation}
\frac{\displaystyle \partial \widehat{c}_{i,0}}{\displaystyle \partial y}+\displaystyle z_i\widehat{c}_{i,0}\frac{\displaystyle \partial \widehat{\Phi}_0}{\displaystyle \partial y}=B, \label{second}
\end{equation}
where $B$ is a constant independent of $y$. Using the matching condition of flux densities in both the inner and outer layers, one {gets}
\begin{align}
\displaystyle \lim\limits_{y \rightarrow \infty} \frac{1}{\displaystyle \epsilon}\left[\frac{\displaystyle \partial \widehat{c}_{i,0}}{\displaystyle \partial y}+\displaystyle z_i\widehat{c}_{i,0} \frac{\displaystyle \partial \widehat{\Phi}_0}{ \displaystyle \partial y}+O(\epsilon)\right]
= \displaystyle \lim\limits_{x \rightarrow -1} \left[\frac{\displaystyle \partial \overline{c}_{i,0}}{\displaystyle \partial x}+\displaystyle \overline{c}_{i,0}\frac{\displaystyle \partial}{\displaystyle \partial x}\displaystyle (z_i\overline{\Phi}_0) +O(\epsilon)\right].
\end{align}
Clearly, one has $B=0$. Integrating Eq.~\eqref{second} and matching the ion concentrations and electric potential, i.e., $
\displaystyle \lim\limits_{y\rightarrow \infty} \displaystyle \widehat{c}_{\pm,0}=\lim\limits_{x \rightarrow -1} \displaystyle \overline{c}_{\pm,0},
\lim\limits_{y\rightarrow \infty}\displaystyle  \widehat{\Phi}_0= \displaystyle \lim\limits_{x \rightarrow -1}\displaystyle \overline{\Phi}_0$, 
{one obtains} the following equations
\begin{equation}
\label{IISolution}
\left\{
\begin{aligned}
&\displaystyle \widehat{c}_{\pm,0}=|z_\mp|\exp({-z_\pm \widehat{\varphi}_0}),  \\
&\displaystyle -\frac{\partial^2 \widehat{\varphi}_0}{\partial y^2}= \widehat{\rho}_0,
\end{aligned}
\right.
\end{equation}
where a new variable $\widehat{\varphi}(y,t):=\widehat{\Phi}(y,t)-\overline{\Phi}(-1,t)$ is introduced to describe the potential drop, and $\widehat{\varphi}_0(y,t)$ represents the leading-order term of $\widehat{\varphi}(y,t)$. The boundary conditions for the potential drop are
$\widehat{\varphi}_0(0,t)=\zeta^L(t)$ and $\widehat{\varphi}_0(\infty,t)=0$, where the zeta potential is defined by $\zeta^L(t)  \triangleq \widehat{\Phi}_0(0,t)-\overline{\Phi}_0(-1,t)$. 
Time dependent $A(t)$, $\overline{j}_0(t)$ and zeta potentials can be solved through ODE system \eqref{potentialdropODE1} in Lemma \ref{lemma:1}.

\begin{lemma} \label{lemma:1}
For the $n=1$ case, the zeta potentials satisfy following closed system of ODEs:
\begin{equation}\label{potentialdropODE1}
\begin{cases}
\displaystyle -C (\zeta^L) \frac{d \zeta^L}{dt}=\alpha \frac{V_+ - V_- + \zeta^L - \zeta^R}{2},\\
\displaystyle -C (\zeta^R) \frac{d \zeta^R}{dt}=\alpha \frac{V_- - V_+ + \zeta^R - \zeta^L}{2},
\end{cases}
\end{equation}
with zero initial conditions. Here $\alpha = z_-^2 z_+ -z_+^2 z_-$.

\end{lemma}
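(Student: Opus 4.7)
The plan is to reduce the problem to two pieces: (i) an algebraic relation expressing the outer unknowns $A(t)$ and $\overline{j}_0(t)$ in terms of the zeta potentials, and (ii) a charge--conservation identity in each EDL that, after invoking the differential capacitance, becomes the announced ODE.

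First I would close the outer solution by enforcing the electrode potentials through matching. Using $\widehat{\Phi}_0(0,t)=V_\pm$ together with the definitions $\zeta^L=\widehat{\Phi}_0(0,t)-\overline{\Phi}_0(-1,t)$ and $\zeta^R=\widehat{\Phi}_0(0,t)-\overline{\Phi}_0(1,t)$, and inserting $\overline{\Phi}_0(x,t)=\overline{j}_0(t)x+A(t)$, a direct two-by-two inversion gives
\begin{equation*}
A(t)=\frac{V_++V_--\zeta^L-\zeta^R}{2},\qquad \overline{j}_0(t)=\frac{V_+-V_-+\zeta^L-\zeta^R}{2}.
\end{equation*}
In particular the bulk field $\partial_x\overline{\Phi}_0=\overline{j}_0$ at leading order is the right-hand side of the ODE divided by $\alpha$, which is already suggestive.

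Next I would compute the diffuse charge in the left EDL and its conservation law. Using Poisson's equation in the inner variable, $q^L(t):=\int_0^\infty \widehat{\rho}_0\,dy=\partial_y\widehat{\varphi}_0(0^+,t)$, and a first integral of the leading-order Poisson--Boltzmann equation \eqref{IISolution} with the far-field conditions $\widehat{\varphi}_0\to 0$, $\partial_y\widehat{\varphi}_0\to 0$, yields $q^L$ as an explicit function of $\zeta^L$ alone. The differential capacitance is then the derivative $C(\zeta^L)=-dq^L/d\zeta^L$. To get the evolution, I would integrate the leading-order NP equations $\partial_t c_i=\epsilon\partial_x J_i$ over the EDL, using the change of variable $x=-1+\epsilon y$ so $\int_{\mathrm{EDL}}\rho\,dx=\epsilon q^L$, the no-flux boundary condition $J_\pm(-1,t)=0$, and matching the inner fluxes to the outer ones $J_{i,0}=z_i|z_{\mp}|\,\overline{j}_0$ at the edge of the layer. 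Summing the two equations weighted by $z_\pm$ and cancelling a common $\epsilon$ gives
\begin{equation*}
\frac{dq^L}{dt}=\bigl(z_+^2|z_-|+z_-^2|z_+|\bigr)\,\overline{j}_0(t)=\alpha\,\overline{j}_0(t),
\end{equation*}
since with $z_+>0,z_-<0$ one checks $z_+^2|z_-|+z_-^2|z_+|=z_-^2z_+-z_+^2z_-=\alpha$.

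Combining these two pieces via the chain rule $dq^L/dt=-C(\zeta^L)\,d\zeta^L/dt$ and substituting $\overline{j}_0$ from the first paragraph produces the first ODE in \eqref{potentialdropODE1}; the second follows by the mirror-image argument at $x=+1$, with the sign flip coming from $\overline{j}_0$ entering with opposite orientation relative to the outward normal of the right EDL, i.e. $\int_{\mathrm{EDL}}\rho\,dx=\epsilon q^R$ but the matching flux at the inner edge $y\to\infty$ of the right layer is $-J_{i,0}$. The zero initial conditions are inherited from the charge-neutral uniform initial state $c_\pm(x,0)=|z_\mp|$, which forces $\widehat{\varphi}_0(\cdot,0)\equiv 0$ and hence $\zeta^{L,R}(0)=0$. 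The main bookkeeping obstacle I anticipate is keeping the signs consistent across the two layers and the $\epsilon$-powers consistent during the integration over the EDL (one factor of $\epsilon$ from the Jacobian, one from the NP equation); once those are pinned down, the algebra is routine.
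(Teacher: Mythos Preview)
Your proposal is correct and follows essentially the same route as the paper: you compute the leading-order diffuse charge $q^L=\widehat{Q}_0$ via the first integral of the inner Poisson--Boltzmann equation, obtain $d\widehat{Q}_0/dt=\alpha\,\overline{j}_0(t)$ by integrating the NP equations across the EDL and matching fluxes, and then close with the chain rule through $C(\zeta)=-d\widehat{Q}_0/d\zeta$ together with the algebraic relation $\overline{j}_0=(V_+-V_-+\zeta^L-\zeta^R)/2$. The only cosmetic difference is that you solve explicitly for $A(t)$ and $\overline{j}_0(t)$ up front, whereas the paper substitutes for $\overline{j}_0$ at the end; your remark about the $\epsilon$-bookkeeping is exactly the point where care is needed, and the paper handles it the same way.
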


\begin{proof}
Consider the leading order asymptotic approximation of the total diffuse charge in the inner layer, $\widehat{Q}_0 =\int_0^{\infty} \left[ z_+\widehat{c}_{+,0}(y,t)+z_-\widehat{c}_{-,0}(y,t)\right] dy$. By Eq.~\eqref{IISolution}, one has
\begin{align}\label{Q0}
\widehat{Q}_0&= \left. \frac{\partial \widehat{\varphi}_0}{\partial y}\right|_{y=0} = \sqrt{2}\sqrt{-z_-e^{-z_+\zeta^L}+z_+e^{-z_-\zeta^L}+z_--z_+},
\end{align}
where integration by parts and $\lim\limits_{y \rightarrow \infty} \partial_y\widehat{\varphi}_0(y,t)= \lim\limits_{y \rightarrow \infty} \partial_y\widehat{\Phi}_0(y,t)=0$ are used in the derivation. Taking the time derivative of the leading-order term of the total diffuse charge and
using the NP equations in Eq.~(\ref{mpnp}), one gets
\begin{align}
\frac{d \widehat{Q}_0}{dt}=\int_0^{\infty} \left( z_{+}\frac{\partial\widehat{ c}_{+,0}}{\partial t}+z_{-}\frac{\partial\widehat{c}_{-,0}}{\partial t} \right) dy
  = \lim_{x\rightarrow -1} \left( \frac{\partial \overline{\rho}_0}{\partial x}+ \alpha \frac{\partial \overline{\Phi}_0}{ \partial x} \right),  \label{diff}
\end{align}
where the zero-flux boundary condition is used. Recall that $\zeta^L(t) = \widehat{\Phi}_0(0,t) - \overline{\Phi}_0(-1,t)= V_- +\overline{j}_0(t)-A(t)$ and the electroneutrality condition $\overline{\rho}_0=0$. Then, the Eq.~\eqref{diff} can be transformed into
\begin{align}
\frac{d \widehat{Q}_0}{d\zeta^L} \frac{d \zeta^L}{dt} =\alpha \overline{j}_0(t),
\label{matching}
\end{align}
where $\overline{j}_0(t)=\partial_x \overline{\Phi}_0$ is independent of $x$. Similarly, for the boundary layer on the right, one can derive an ODE for the potential drop there.
Introduce the differential capacitance $C(\zeta^L)=-d \widehat{Q}_0(\zeta^L)/d \zeta^L$.
By the expression of the total diffuse charge~\reff{Q0}, the exact solution of the differential capacitance can be derived as follows:
\begin{equation}\label{DC}
{C(u) = \text{sgn}(u)\frac{\displaystyle  z_+z_-\left( e^{-z_+ u} -e^{-z_- u}\right)}{\displaystyle \displaystyle \sqrt{2}\sqrt{-z_-e^{-z_+ u}+z_+e^{-z_- u}+z_--z_+}} }.
\end{equation}
It is easy to show that the radicand in~\reff{DC} is nonnegative. Thus, a closed system of ODEs for the potential drops can be derived:
\begin{equation}\label{potentialdropODE}
\begin{cases}
\displaystyle -C (\zeta^L) \frac{d \zeta^L}{dt}=\alpha \frac{V_+ - V_- + \zeta^L - \zeta^R}{2},\\
\displaystyle -C (\zeta^R) \frac{d \zeta^R}{dt}=\alpha \frac{V_- - V_+ + \zeta^R - \zeta^L}{2},
\end{cases}
\end{equation}
where $\zeta^R(t)=V_+-\overline{\Phi}_0(1,t)= V_+ -\overline{j}_0(t)-A(t)$, and zero initial conditions, $\zeta^L(0)=\zeta^R(0)=0$, are prescribed. Thus, the proof is complete. 

\end{proof}

\begin{remark}
\textit{
For symmetric systems, the ODE system~\reff{potentialdropODE1} reduces to 
\begin{equation}
\displaystyle -C (\zeta^L) \frac{d \zeta^L}{dt}=2z_+^3 (V_+ + \zeta^L),
\end{equation}
where $C(u) =\sqrt{2}z_+^{3/2}\cosh( z_+u/2)$.
For asymmetric systems, one needs to numerically solve the ODE system~\eqref{potentialdropODE} for $\zeta^L(t)$ and $\zeta^R(t)$. Then, the functions $\overline{j}_0(t)$ and $A(t)$, as well as the inner and outer solutions, can be obtained accordingly.
Detailed analysis of the leading order approximation for the case of $n=1$ has also been investigated in literature for electrochemical systems~\cite{BTA:PRE:04,ji2018asymptotic}. 
}
\end{remark}

We next extend such an MAE approach to the stack-electrode model that mimics the porous structure of supercapacitors with multiple electrodes, i.e., $n>1$.

\subsection{The $n=2$ case}
\label{sec:n2}
For the case of $n=2$, one places another two electrodes at $x = \pm \tilde{\calL}$, where $ \tilde{\calL} =  \pm \calL/D$ and again the tilde is omitted for simplicity.  
One aims to investigate the leading order dynamics within the left half domain $[-1, 0]$ first.

\textbf{\textit{Outer solution}}--
\label{sec:bulk:n2}
Similar to the notations of the $n=1$ case, bulk solutions in domains $1_b$, and $2_{b}$ are denoted by $\overline{c}_\pm^1, \overline{c}_\pm^{2}$, $\overline{\Phi}^{1}$, and $\overline{\Phi}^{2}$. Referring to the analysis in Sec.~\ref{sec:n1}, one can directly obtain the leading order solutions of the PNP equations in each bulk area as follows:
\begin{equation}
\label{eq:BulkSolution:n2:1}
\left\{
\begin{aligned}
&\overline{c}^1_{\pm,0}(x,t)=|z_\mp|, \\
&\overline{\Phi}^1_0(x,t)=\overline{j}^1_0(t)x+A^1(t), -\calL< x <\calL,
\end{aligned}
\right.
\end{equation}
and
\begin{equation}
\label{eq:BulkSolution:n2:2}
\left\{
\begin{aligned}
&\overline{c}^{2}_{\pm,0}(x,t)=|z_\mp|, \\
&\overline{\Phi}^{2}_0(x,t)=\overline{j}^{2}_0(t)x+A^{2}(t), -1< x <-\calL,
\end{aligned}
\right.
\end{equation}
with initial currents $\overline{j}^1_0(t=0) = (V_+ - V_-)/(2\calL)$, $\overline{j}^{2}_0(t=0) =0$, $A^1(t=0) = (V_++V_-)/2$, and $A^{2}(t=0) = V_-$.

\textbf{\textit{Inner solution}}--
For the inner solutions, one first focuses on the inner layer of $(1,L)$. Denote the solution by
$\widehat{c}^1_\pm(y,t)$ and $\widehat{\Phi}^1(y,t)$ with a scale transformation $y = (x+\calL)/\epsilon$. The leading order solutions can be obtained by using the flux matching conditions between inner layers and outer layers of the sub-domain $1$:
\begin{equation}
\label{eq:IISolution:n2:1}
\left\{
\begin{aligned}
&\widehat{c}^{1,L}_{\pm,0}=|z_\mp|\exp({-z_\pm \widehat{\varphi}^{1,L}_0}),  \\
&-\frac{\partial^2 \widehat{\varphi}^{1,L}_0}{\partial y^2}=z_+\widehat{c}^{1,L}_{+,0}+z_-\widehat{c}^{1,L}_{-,0},
\end{aligned}
\right.
\end{equation}
where $\widehat{\varphi}^{1,L}_0(y,t)$ represents the leading-order term of the potential drop $\widehat{\varphi}^{1,L}(y,t) := \widehat{\Phi}^{1,L}(y,t) - \overline{\Phi}^{1}(-\calL,t)$. The boundary conditions are
$\widehat{\varphi}^{1,L}_0(0,t)=\zeta^{1,L}(t)\triangleq \widehat{\Phi}^{1,L}_0(0,t)-\overline{\Phi}^{1}_0(-\calL,t)$  and $\widehat{\varphi}^{1,L}_0(\infty,t)=0$.
Transformation variables $y_2 =(x+1)/\epsilon \in (0, \infty)$ and $y_1 = (x+\calL)/\epsilon \in (-\infty, 0)$ are used in the inner layers of $(2,L)$, and $(2,R)$, respectively.
Resorting to the above analysis and using matchings between bulk areas and the corresponding inner layers, give the following leading order inner solutions 
\begin{equation}
\left\{
\begin{aligned}
&\widehat{c}^{2,L}_{\pm,0}=|z_\mp|\exp({-z_\pm \widehat{\varphi}^{2,L}_0}),  \\
&-\frac{\partial^2 \widehat{\varphi}^{2,L}_0}{\partial y_2^2}=z_+\widehat{c}^{2,L}_{+,0}+z_-\widehat{c}^{2,L}_{-,0},
\end{aligned}
\label{eq:Poisson:n2:3}
\right.
\end{equation}
and
\begin{equation}
\left\{
\begin{aligned}
&\widehat{c}^{2,R}_{\pm,0}=|z_\mp|\exp({-z_\pm \widehat{\varphi}^{2,R}_0}),  \\
&-\frac{\partial^2 \widehat{\varphi}^{2,R}_0}{\partial y_1^2}=z_+\widehat{c}^{2,R}_{+,0}+z_-\widehat{c}^{2,R}_{-,0},
\end{aligned}
\label{eq:Poisson:n2:2}
\right.
\end{equation}
where $\widehat{c}^{2,L}_{\pm,0}$ 
 and $\widehat{c}^{2,R}_{\pm,0}$ 
 represent the leading order ionic concentrations
 in $(2,L)$, and $(2,R)$, respectively, and $\widehat{\varphi}^{2,L}_0$ and $
\widehat{\varphi}^{2,R}_0$ represent the leading order approximations of potential drops $\widehat{\varphi}^{2,L}:= \widehat{\Phi}^{2,L}(y_2,t) - \overline{\Phi}^{2}(-1,t)$ and $\widehat{\varphi}^{2,R}:= \widehat{\Phi}^{2,R}(y_1,t) - \overline{\Phi}^{2}(-\calL,t)$ in $(2,L)$ and $(2,R)$, respectively. Define $\zeta^{2,L}(t) \triangleq \widehat{\varphi}^{2,L}_0(y_2 =0,t)$ and $\zeta^{2,R}(t) \triangleq \widehat{\varphi}^{2,R}_0(y_1 =0,t)$. The potential matching condition gives that $\widehat{\varphi}^{2,L}_0(\infty,t) =0$ and $
\widehat{\varphi}^{2,R}_0(-\infty,t) =0$, thus the above Poisson--Boltzmann (PB) equations \eqref{eq:Poisson:n2:3} are determined by $\zeta^{2,L}(t)$, $\zeta^{2,R}(t)$, and zeros boundary conditions. 
Leading order solutions in right sub-domains can be derived similarly with potential drops $\zeta^{1,R}$, $\zeta^{2',L}$, and $\zeta^{2',R}$ being boundary conditions. These zeta potentials can be solved through ODE system \eqref{eq:ODE:n2:final1} in Lemma \ref{lemma:2}, and all the MAE solutions can be calculated subsequently.

\begin{lemma} \label{lemma:2}
For the $n=2$ case, all the zeta potentials satisfy the following closed system of ODEs
\begin{align}
\begin{cases}
\displaystyle -C^{2,L}\frac{d\zeta^{2,L}}{dt} = \alpha \frac{\zeta^{2,L} -\zeta^{1,L}}{1-\calL},\\
\displaystyle -2C^{1,L}\frac{d\zeta^{1,L}}{dt} =\alpha\left( \frac{\zeta^{1,L} -\zeta^{1,R}-V_-+V_+}{2\calL} - \frac{\zeta^{2,L} -\zeta^{1,L}}{1-\calL} \right),\\
\displaystyle -2C^{1,R}\frac{d\zeta^{1,R}}{dt} = \alpha\left(\frac{\zeta^{1,R} -\zeta^{2',R}}{1-\calL} - \frac{\zeta^{1,L} -\zeta^{1,R}-V_-+V_+}{2\calL}\right),\\
\displaystyle -C^{2',R}\frac{d\zeta^{2',R}}{dt} = \alpha\frac{\zeta^{1,R} -\zeta^{2',R}}{1-\calL},
\end{cases}
\label{eq:ODE:n2:final1}
\end{align} 
with zero initial conditions.

\end{lemma}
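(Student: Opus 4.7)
The plan is to repeat the total-diffuse-charge argument of Lemma~\ref{lemma:1} at each electrode in the $n=2$ geometry, with the essential new ingredient being how the two inner layers that sit on either side of an \emph{internal} (porous) electrode interact. The leading-order Boltzmann structure of \eqref{eq:IISolution:n2:1}--\eqref{eq:Poisson:n2:2} gives, for every inner layer $(k,s)$, the same formula for the total diffuse charge $\widehat{Q}_0^{k,s}=\left.\partial_y\widehat{\varphi}_0^{k,s}\right|_{y=0}$ as in \eqref{Q0} (with $\zeta^{k,s}$ in place of $\zeta^L$), and hence the same differential capacitance $C(\zeta^{k,s})=-d\widehat{Q}_0^{k,s}/d\zeta^{k,s}$ given by \reff{DC}. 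A first observation is that because the porous internal electrode is transparent to ions, the ion concentration must be single-valued on its surface; combined with the Boltzmann relation this forces $\zeta^{1,L}=\zeta^{2,R}$ at $x=-\mathcal{L}$ and $\zeta^{1,R}=\zeta^{2',L}$ at $x=\mathcal{L}$, leaving only the four potentials $\zeta^{2,L},\zeta^{1,L},\zeta^{1,R},\zeta^{2',R}$ as independent unknowns.

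For each independent potential I would differentiate $\widehat{Q}_0$ in time, use the NP equation in the inner variable to write $\partial_t\widehat{c}_{i,0}=\partial_y\mathcal{J}_i^{(1)}$, integrate across the inner layer, and match $\mathcal{J}_i^{(1)}$ on the bulk side to the outer flux $z_i|z_\mp|\overline{j}_0$ exactly as in \reff{diff}--\reff{matching}. At the outer electrodes $x=\pm 1$ a single inner layer is attached and the no-flux condition at $y=0$ eliminates the electrode-side contribution, producing $-C^{2,L}\,d\zeta^{2,L}/dt=\alpha\,\overline{j}_0^{2}$ and $-C^{2',R}\,d\zeta^{2',R}/dt=\alpha\,\overline{j}_0^{2'}$. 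At an internal electrode the two adjacent inner layers share the same $\zeta$, so the relevant charge is $\widehat{Q}_0^{1,L}+\widehat{Q}_0^{2,R}=2\widehat{Q}_0^{1,L}$, which is the source of the factor $2$ in the middle ODEs. Summing the two time-differentiated balances, the electrode-side fluxes cancel by the prescribed flux-continuity across the porous electrode and only the two bulk-side contributions survive, yielding $-2C^{1,L}\,d\zeta^{1,L}/dt=\alpha(\overline{j}_0^{1}-\overline{j}_0^{2})$, and analogously $-2C^{1,R}\,d\zeta^{1,R}/dt=\alpha(\overline{j}_0^{2'}-\overline{j}_0^{1})$ at $x=\mathcal{L}$.

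The last step converts the bulk currents into zeta potentials by evaluating the linear outer potentials \eqref{eq:BulkSolution:n2:1}--\eqref{eq:BulkSolution:n2:2} at the two endpoints of each sub-domain and using the definitions of the $\zeta$'s; this yields
\begin{equation*}
\overline{j}_0^{1}=\frac{\zeta^{1,L}-\zeta^{1,R}+V_+-V_-}{2\mathcal{L}},\qquad \overline{j}_0^{2}=\frac{\zeta^{2,L}-\zeta^{1,L}}{1-\mathcal{L}},\qquad \overline{j}_0^{2'}=\frac{\zeta^{1,R}-\zeta^{2',R}}{1-\mathcal{L}},
\end{equation*}
and substitution reproduces \eqref{eq:ODE:n2:final1}. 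Zero initial data follow from the uniform electroneutral initial state, so all zeta potentials vanish at $t=0$. The main obstacle I anticipate is the sign bookkeeping at the internal electrodes: one must orient both inner coordinates so that flux continuity across the electrode becomes an additive cancellation when the two $d\widehat{Q}_0/dt$'s are combined, giving the difference $\overline{j}_0^{1}-\overline{j}_0^{2}$ (rather than a sum) on the right-hand side of the middle two ODEs.
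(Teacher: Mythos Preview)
Your proposal is correct and follows essentially the same route as the paper's proof: the paper likewise differentiates the total diffuse charges $\widehat{Q}_0^{k,S}$ in each inner layer, uses flux matching with the bulk together with the no-flux and flux-continuity conditions at the outer and inner electrodes respectively, invokes concentration continuity across the porous electrodes to identify $\zeta^{1,L}=\zeta^{2,R}$ and $\zeta^{1,R}=\zeta^{2',L}$ (whence $C^{1,L}=C^{2,R}$, which is the source of the factor~$2$), and then expresses $\overline{j}_0^{\,1},\overline{j}_0^{\,2},\overline{j}_0^{\,2'}$ in terms of the zeta potentials exactly as you do. Your anticipated sign bookkeeping at the internal electrodes is precisely where the paper's care goes, handled there by writing the electrode-side fluxes $F^{k,S}$ explicitly and verifying that they cancel when the two adjacent charge balances are summed.
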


\begin{proof}
Consider the charging process, where the total diffuse charges $\widehat{Q}_0^{k,S}$, $k= 1, 2, 2'$ and $S=L, R$, are given by
\begin{align}
\left\{
\begin{aligned}
&\widehat{Q}_0^{2,L}
=\sqrt{2}\sqrt{-z_-e^{-z_+\zeta^{2,L}}+z_+e^{-z_-\zeta^{2,L}}+z_--z_+},\\
&\widehat{Q}_0^{1,L}=\sqrt{2}\sqrt{-z_-e^{-z_+\zeta^{1,L}}+z_+e^{-z_-\zeta^{1,L}}+z_--z_+},\\
&\widehat{Q}_0^{2,R} =\sqrt{2}\sqrt{-z_-e^{-z_+\zeta^{2,R}}+z_+e^{-z_-\zeta^{2,R}}+z_--z_+},\\
&\widehat{Q}_0^{1,R}= -\sqrt{2}\sqrt{-z_-e^{-z_+\zeta^{1,R}}+z_+e^{-z_-\zeta^{1,R}}+z_--z_+},\\
&\widehat{Q}_0^{2',L}= -\sqrt{2}\sqrt{-z_-e^{-z_+\zeta^{2',L}}+z_+e^{-z_-\zeta^{2',L}}+z_--z_+}, \\
&\widehat{Q}_0^{2',R}= -\sqrt{2}\sqrt{-z_-e^{-z_+\zeta^{2',R}}+z_+e^{-z_-\zeta^{2',R}}+z_--z_+}. 
\end{aligned}
\right.
\end{align}
Their time derivatives are given by
\begin{align}
\left\{
\begin{aligned}
&\frac{d \widehat{Q}^{1,L}_0}{dt}
=\alpha \overline{j}_0^1(t)- \lim_{y\rightarrow 0}\frac{1}{\epsilon}\left[\frac{\partial \widehat{\rho}_0^{1,L}}{\partial y}+\left(z_+^2\widehat{c}^{1,L}_{+,0}+z_{-}^2\widehat{c}^{1,L}_{-,0}\right) \frac{\partial \widehat{\Phi}_0^{1,L}}{ \partial y}\right],\\
&\frac{d \widehat{Q}^{2,L}_0}{dt} =\alpha\overline{j}^{2}_0(t),  \\
&\frac{d \widehat{Q}^{2,R}_0}{dt}
= \lim_{y_1 \rightarrow 0}\frac{1}{\epsilon}\left[\frac{\partial \widehat{\rho}_0^{2,R}}{\partial y_1}+\left(z_+^2\widehat{c}^{2,R}_{+,0}+z_{-}^2\widehat{c}^{2,R}_{-,0}\right) \frac{\partial \widehat{\Phi}_0^{2,R}}{ \partial y_1}\right] -\alpha\overline{j}^{2}_0(t), \\
&\frac{d \widehat{Q}^{1,R}_0}{dt}
=-\lim_{y\rightarrow 0}\frac{1}{\epsilon}\left[\frac{\partial \widehat{\rho}_0^{1,R}}{\partial y}+\left(z_+^2\widehat{c}^{1,R}_{+,0}+z_{-}^2\widehat{c}^{1,R}_{-,0}\right) \frac{\partial \widehat{\Phi}_0^{1,R}}{ \partial y}\right] -\alpha \overline{j}_0^1(t),\\
&\frac{d \widehat{Q}^{2',L}_0}{dt}
= \lim_{y_1 \rightarrow 0}\frac{1}{\epsilon}\left[\frac{\partial \widehat{\rho}_0^{2',L}}{\partial y_1}+\left(z_+^2\widehat{c}^{2',L}_{+,0}+z_{-}^2\widehat{c}^{2',L}_{-,0}\right) \frac{\partial \widehat{\Phi}_0^{2',L}}{ \partial y_1}\right]  + \alpha\overline{j}^{2'}_0(t),\\
&\frac{d \widehat{Q}^{2',R}_0}{dt} = -\alpha\overline{j}^{2'}_0(t),
\end{aligned}
\right.
\end{align}
where the \textit{matching of flux} between the bulk and inner layers has been used.

By the definition of the zeta potentials, {each current can be determined by two corresponding zeta potentials: $\overline{j}_0^1 = (\zeta^{1,L} -\zeta^{1,R}-V_-+V_+)/(2\calL)$. 
By the flux continuity at electrodes, one has}
\begin{align*}
&\lim_{y_1 \rightarrow 0}\frac{1}{\epsilon}\left[\frac{\partial \widehat{\rho}_0^{2,R}}{\partial y_1}+\left(z_+^2\widehat{c}^{2,R}_{+,0}+z_{-}^2\widehat{c}^{2,R}_{-,0}\right) \frac{\partial \widehat{\Phi}_0^{2,R}}{ \partial y_1}\right] = \lim_{y\rightarrow 0}\frac{1}{\epsilon}\left[\frac{\partial \widehat{\rho}_0^{1,L}}{\partial y}+\left(z_+^2\widehat{c}^{1,L}_{+,0}+z_{-}^2\widehat{c}^{1,L}_{-,0}\right) \frac{\partial \widehat{\Phi}_0^{1,L}}{ \partial y}\right],\\
&\lim_{y_1 \rightarrow 0}\frac{1}{\epsilon}\left[\frac{\partial \widehat{\rho}_0^{2',L}}{\partial y_1}+\left(z_+^2\widehat{c}^{2',L}_{+,0}+z_{-}^2\widehat{c}^{2',L}_{-,0}\right) \frac{\partial \widehat{\Phi}_0^{2',L}}{ \partial y_1}\right] = \lim_{y\rightarrow 0}\frac{1}{\epsilon}\left[\frac{\partial \widehat{\rho}_0^{1,R}}{\partial y}+\left(z_+^2\widehat{c}^{1,R}_{+,0}+z_{-}^2\widehat{c}^{1,R}_{-,0}\right) \frac{\partial \widehat{\Phi}_0^{1,R}}{ \partial y}\right].
%\label{eq:fluxmatching:n2}
\end{align*}
On the other hand, the continuous conditions of ionic concentrations at the internal electrodes give
\begin{align*}
\lim\limits_{y \rightarrow 0} \widehat{c}_{\pm,0}^{1,L}= \lim\limits_{y_1 \rightarrow 0} \widehat{c}_{\pm,0}^{2,R},~~
\lim\limits_{y \rightarrow 0} \widehat{c}_{\pm,0}^{1,R}= \lim\limits_{y_1 \rightarrow 0} \widehat{c}_{\pm,0}^{2'L},
\end{align*}
which indicates that
\begin{align*}
\zeta^{1,L} =\zeta^{2,R},~~
\zeta^{1,R} = \zeta^{2',L}, ~C^{1,L}=C^{2,R}, ~C^{1,R} = C^{2',L}.
\end{align*}
Therefore, one finally arrives at
\begin{align}
\begin{cases}
\displaystyle -C^{2,L}\frac{d\zeta^{2,L}}{dt} = \alpha \frac{\zeta^{2,L} -\zeta^{1,L}}{1-\calL},\\
\displaystyle -2C^{1,L}\frac{d\zeta^{1,L}}{dt} =\alpha\left( \frac{\zeta^{1,L} -\zeta^{1,R}-V_-+V_+}{2\calL} - \frac{\zeta^{2,L} -\zeta^{1,L}}{1-\calL} \right),\\
\displaystyle -2C^{1,R}\frac{d\zeta^{1,R}}{dt} = \alpha\left(\frac{\zeta^{1,R} -\zeta^{2',R}}{1-\calL} - \frac{\zeta^{1,L} -\zeta^{1,R}-V_-+V_+}{2\calL}\right),\\
\displaystyle -C^{2',R}\frac{d\zeta^{2',R}}{dt} = \alpha\frac{\zeta^{1,R} -\zeta^{2',R}}{1-\calL},
\end{cases}
\label{eq:ODE:n2:final}
\end{align} 
with zero initial conditions. Here, one 
has used the chain rule and the definition of the differential capacitance, that is, 
\begin{align*}
\frac{d \widehat{Q}^{k,S}_0}{dt} = -C^{k, S}\frac{d\zeta^{k,S}}{dt},
\end{align*}
with notations $C^{k, S}= C(\zeta^{k,S})$ for $k= 1, 2, 2'$ and $S=L, R$. Thus, the proof is finished.

\end{proof}

\subsection{General case: $n>2$}
\label{sec:AsymSolutionGeneral}

For the general case, one
denotes the leading-order outer solutions in each sub-domain as $\overline{c}_{\pm,0}^{k}$ and $\overline{\Phi}_0^{k}$, which can be analogously derived as
\begin{equation}
\label{eq:BulkSolution:nn:k}
\left\{
\begin{aligned}
&\overline{c}^k_{\pm,0}(x,t)=|z_\mp|, ~-x_{k}< x <-x_{k-1}, ~k=2, \cdots, n, \\
&\overline{\Phi}^k_0(x,t)=\overline{j}^k_0(t)x+A^k(t), ~ -x_{k}< x <-x_{k-1},~ k=2, \cdots, n,
\end{aligned}
\right.
\end{equation}
with initial values $\overline{j}^k_0(t=0) = 0$ and $A^k(t=0) = V_-$. On the other hand, the leading-order solutions in the right sub-domains are given by
\begin{equation}
\label{eq:BulkSolution:nn:kprime}
\left\{
\begin{aligned}
&\overline{c}^{k'}_{\pm,0}(x,t)=|z_\mp|, ~-x_{k'-1}< x < x_{k'}, ~k=2, \cdots, n, \\
&\overline{\Phi}^{k'}_0(x,t)=\overline{j}^{k'}_0(t)x+A^{k'}(t), ~x_{k'-1}< x < x_{k'}, ~k=2, \cdots, n,
\end{aligned}
\right.
\end{equation}
with initial values $\overline{j}^{k'}_0(t=0) = 0$ and $A^{k'}(t=0) = V_+$. For the solutions in the bulk area $1$ (or denoted by $1'$), the leading-order outer solution is given by 
\begin{equation}
\label{eq:BulkSolution:nn:1}
\left\{
\begin{aligned}
&\overline{c}^1_{\pm,0}(x,t)=|z_\mp|,~  -\calL< x < \calL, \\
&\overline{\Phi}^1_0(x,t)=\overline{j}^1_0(t)x+A^1(t), ~ -\calL< x < \calL,
\end{aligned}
\right.
\end{equation}
with initial values $\overline{j}^1_0(t=0) = (V_+ - V_-)/(2\calL)$ and $A^1(t=0) = (V_+ + V_-)/2$. 
Obviously, each potential function is linear in the bulk regions, and the ionic concentrations are constant.

 For the left inner solutions in the left sub-domains $(k,L), k=1, \cdots, n$, define $y_{2k-1}=(x+x_k)/\epsilon \in (0, +\infty)$, and use $\widehat{c}^{k,L}_\pm(y_{2k-1},t)$, $\widehat{\Phi}^{k,L}(y_{2k-1},t)$, and $\widehat{\varphi}^{k,L}(y_{2k-1},t) := \widehat{\Phi}^{k,L}(y_{2k-1},t) - \overline{\Phi}^{k}(-x_k,t)$ to represent the ionic concentrations, potential distributions, and potential drops, respectively. The leading-order solutions satisfy
\begin{equation}
\label{eq:IISolution:nn:kL}
\left\{
\begin{aligned}
&\widehat{c}^{k,L}_{\pm,0}=|z_\mp|\exp({-z_\pm \widehat{\varphi}^{k,L}_0}),  \\
&-\frac{\partial^2 \widehat{\varphi}^{k,L}_0}{\partial y_{2k-1}^2}=z_+\widehat{c}^{k,L}_{+,0}+z_-\widehat{c}^{k,L}_{-,0},
\end{aligned}
\right.
\end{equation} 
with boundary conditions $\widehat{\varphi}^{k,L}_0(y_{2k-1}=0,t) = \zeta^{k,L}$ and $\widehat{\varphi}^{k,L}_0(y_{2k-1}=\infty,t)=0$, $k=1, \cdots, n$. Here $\zeta^{k,L}$ is the zeta potential in the inner layer of the left sub-domain $(k,L)$.  Similarly, with the scale transformation $y_{2(k-1)}=(x+x_{k-1})/\epsilon \in (-\infty, 0),$ {one obtains that} the potential drop in right inner region of $(k,R)$, i.e., $\widehat{\varphi}^{k,R}_0(y_{2(k-1)},t) := \widehat{\Phi}^{k,R}_0(y_{2(k-1)},t) - \overline{\Phi}^{k}_0(-x_{k-1},t)$, which satisfies
\begin{equation}
\label{eq:IISolution:nn:kR}
\left\{
\begin{aligned}
&\widehat{c}^{k,R}_{\pm,0}=|z_\mp|\exp({-z_\pm \widehat{\varphi}^{k,R}_0}),  \\
&-\frac{\partial^2 \widehat{\varphi}^{k,R}_0}{\partial y_{2(k-1)}^2}=z_+\widehat{c}^{k,R}_{+,0}+z_-\widehat{c}^{k,R}_{-,0},
\end{aligned}
\right.
\end{equation}
with boundary conditions $\widehat{\varphi}^{k,R}_0(y_{2(k-1)}=0,t) = \zeta^{k,R}$ and $\widehat{\varphi}^{k,R}_0(y_{2(k-1)}=-\infty,t)=0$, $k=2, \cdots, n$. Here $\zeta^{k,R}$ is the zeta potential in the inner layer of the right sub-domain $(k,R)$.
The leading-order inner solutions in the right sub-domains can also be analogously derived. We use the {same notations as those} in the left sub-domain but with a prime on the superscript. Define the transformation variable $y_{2(k'-1)}=({x_{k'-1}-x})/{\epsilon}$, $k'=2, \cdots, n$, and the potential drop $\widehat{\varphi}^{k',L}(y_{2(k'-1)},t) := \widehat{\Phi}^{k',L}(y_{2(k'-1)},t) - \overline{\Phi}^{k'}(x_{k'-1},t)$. Then one can analogously derive
\begin{equation}
\label{eq:IISolution:nn:kprimeL}
\left\{
\begin{aligned}
&\widehat{c}^{k',L}_{\pm,0} =|z_\mp|\exp({-z_\pm \widehat{\varphi}^{k',L}_0}),  \\
&-\frac{\partial^2 \widehat{\varphi}^{k',L}_0}{\partial y_{2(k'-1)}^2} =z_+\widehat{c}^{k',L}_{+,0}+z_-\widehat{c}^{k',L}_{-,0}, 
\end{aligned}
\right.
\end{equation}  
with boundary conditions $\widehat{\varphi}^{k',L}_0(y_{2(k'-1)}=0,t) =\zeta^{k',L} $ and $\widehat{\varphi}^{k',L}_0(y_{2(k'-1)}=-\infty,t)=0$.  For the right inner solutions in $(k',R)$, introduce the scale transformation $y_{2k'-1}=(x_{k'}-x)/\epsilon$ and the potential drop $\widehat{\varphi}^{k',R}(y_{2k'-1},t) := \widehat{\Phi}^{k',R}(y_{2k'-1},t) - \overline{\Phi}^{k'}(x_{k'},t)$. Then one can analogously derive
\begin{equation}
\label{eq:IISolution:nn:kprimeR}
\left\{
\begin{aligned}
&\widehat{c}^{k',R}_{\pm,0}=|z_\mp|\exp({-z_\pm \widehat{\varphi}^{k',R}_0}),  \\
&-\frac{\partial^2 \widehat{\varphi}^{k',R}_0}{\partial y_{2k'-1}^2}=z_+\widehat{c}^{k',R}_{+,0}+z_-\widehat{c}^{k',R}_{-,0}, 
\end{aligned}
\right.
\end{equation} 
with boundary conditions $\widehat{\varphi}^{k',R}_0(y_{2k'-1}=0,t) = \zeta^{k',R}$ and $\widehat{\varphi}^{k',R}_0(y_{2k'-1}=\infty,t)=0$, $k=1, \cdots, n$.

All these leading-order solutions are determined by  zeta potentials $\zeta^{k',L}, \zeta^{k',R}, \zeta^{k,R}, \zeta^{k,L}$, which can be solved through the ODE system \eqref{eq:matrix:form} in Theorem \ref{theorem:n}.

\begin{theorem}\label{theorem:n}
For the system with $2n$ electrodes, all the zeta potentials satisfy the following closed system of ODEs
\begin{equation}
\bm{C}(\zeta)
\dot{\zeta} =\alpha(-\mathcal{T}\zeta + Y),
\label{eq:matrix:form}
\end{equation}
with zero initial conditions. Here $\zeta =[\zeta^{n,L}, \cdots, \zeta^{1,L}, \zeta^{1',R}, \cdots, \zeta^{n',R}]^T$,\\
 ${\bf C}(\zeta)=\text{diag}\left\{C (\zeta^{n,L}), 2C(\zeta^{n-1,L}), \cdots, 2C(\zeta^{1,L}), 2C(\zeta^{1',R}), \cdots, 2C(\zeta^{(n-1)',R}, C(\zeta^{n',R})\right\}$,
{\footnotesize{
\begin{equation*}
\mathcal{ T}=
\begin{pmatrix}
\frac{1}{h} & -\frac{1}{h}& & &\\
-\frac{1}{h} & \frac{2}{h}& -\frac{1}{h}& & \\
&  \ddots & \ddots & \ddots & \\
& &-\frac{1}{h} & (\frac{1}{h}+\frac{1}{2\calL})& -\frac{1}{2\calL}&\\
& & & -\frac{1}{2\calL}&(\frac{1}{h}+\frac{1}{2\calL}) &-\frac{1}{h} & \\
& & & & -\frac{1}{h} &\frac{2}{h}&-\frac{1}{h} & &\\
& & & & & \ddots& \ddots & \ddots  & &\\
& & & & & & -\frac{1}{h} &\frac{1}{h}
\end{pmatrix}_{2n \times 2n},~~
Y=
\begin{pmatrix}
0\\
0\\
\vdots\\
-\frac{V_+-V_-}{2\calL}\\
-\frac{V_--V_+}{2\calL}\\
0\\
\vdots\\
0\\
0
\end{pmatrix}_{2n \times 1}.
\end{equation*}
}}

\end{theorem}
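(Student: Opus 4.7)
The plan is to extend directly the argument used in the proofs of Lemma~\ref{lemma:1} and Lemma~\ref{lemma:2}, keeping the same five ingredients but organizing them for arbitrary $n$. These are: (i) the Gouy--Chapman expression for each leading-order diffuse charge $\widehat{Q}_0^{k,S}$ in terms of its zeta potential; (ii) a time-derivative identity for $\widehat{Q}_0^{k,S}$ obtained by integrating the NP equation in the inner variable and using flux matching with the bulk; (iii) flux continuity across each internal electrode; (iv) ionic concentration continuity at each internal electrode; and (v) the linearity of $\overline{\Phi}_0^k$ in $x$ on each bulk sub-domain. Since these were already verified in the $n=1$ and $n=2$ cases, I will treat them as building blocks and focus on assembling them into the stated matrix form.

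First, I would write $\widehat{Q}_0^{k,S} = \pm\sqrt{2}\sqrt{-z_- e^{-z_+\zeta^{k,S}}+z_+ e^{-z_-\zeta^{k,S}}+z_- -z_+}$ for every $k=1,\ldots,n$ and $k'=1',\ldots,n'$ and $S\in\{L,R\}$, with the sign determined by the orientation of the inner variable $y$ used in \eqref{eq:IISolution:nn:kL}--\eqref{eq:IISolution:nn:kprimeR}. Differentiating in $t$ yields $d\widehat{Q}_0^{k,S}/dt = -C(\zeta^{k,S})\,d\zeta^{k,S}/dt$ with $C$ as in \eqref{DC}. Next, mimicking the derivation of \eqref{diff}, I would differentiate $\widehat{Q}_0^{k,S}$ by integrating the NP equations in $y$ and matching to the bulk, obtaining for each interior stack
\begin{equation*}
\frac{d\widehat{Q}_0^{k,L}}{dt} = \alpha\,\overline{j}_0^{k} - F^{k,L},\qquad
\frac{d\widehat{Q}_0^{k,R}}{dt} = F^{k,R} - \alpha\,\overline{j}_0^{k},
\end{equation*}
where $F^{k,S}$ denotes the subleading flux limit at the electrode; for the outermost electrodes ($k=n$ and $k=n'$) the zero-flux condition at $x=\pm 1$ forces $F^{n,L}=F^{n',R}=0$.

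I would then invoke flux continuity across each internal electrode $x=-x_{k-1}$ (and its mirror on the right), which gives $F^{k,L}=F^{k-1,R}$, so that \emph{adding} the two time-derivative identities on either side of that electrode cancels the $F$ terms and leaves only $\alpha(\overline{j}_0^{k}-\overline{j}_0^{k-1})$ on the right. Concentration continuity at the same electrode forces $\zeta^{k,L}=\zeta^{k-1,R}$, hence also $C^{k,L}=C^{k-1,R}$; but the two charges are oriented oppositely, so their time derivatives \emph{add}, producing the doubled capacitance $2C(\zeta^{k,L})$ that decorates all interior diagonal entries of $\bm{C}(\zeta)$, while the outermost stacks $\zeta^{n,L}$ and $\zeta^{n',R}$ keep a single $C$. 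Finally, linearity of $\overline{\Phi}_0^k$ in $x$ together with potential matching at electrodes gives $\overline{j}_0^{k}=(\zeta^{k,L}-\zeta^{k-1,L})/h$ for $k=2,\ldots,n$, and $\overline{j}_0^{1}=(\zeta^{1,L}-\zeta^{1',R}-V_-+V_+)/(2\calL)$ for the central bulk of width $2\calL$. Substituting these difference quotients yields the tridiagonal operator $\mathcal{T}$ with entries $\pm 1/h$ away from the middle, the two modified middle diagonal entries $1/h+1/(2\calL)$, and the off-diagonal coupling $-1/(2\calL)$ across the central bulk; the forcing $Y$ picks up $\mp(V_+-V_-)/(2\calL)$ at the two middle rows.

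The main obstacle I expect is disciplined bookkeeping of signs and indices: each internal electrode is shared by two inner layers whose inner coordinates point in opposite directions, so one must track which $\widehat{Q}_0^{k,S}$ carries which sign, verify that fluxes balance (rather than cancel) and that the two capacitances add, and match the orientation convention used in \eqref{eq:IISolution:nn:kL}--\eqref{eq:IISolution:nn:kprimeR} consistently throughout. Once the $n=3$ case is written out explicitly as a sanity check against Lemma~\ref{lemma:2}, the pattern is clear and a short induction on $n$ (or equivalently, pattern-matching row by row) establishes the full matrix form \eqref{eq:matrix:form}.
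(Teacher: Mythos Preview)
Your proposal is correct and follows essentially the same approach as the paper: it assembles the same five ingredients (Gouy--Chapman charges, time-derivative identities via flux matching, flux continuity, concentration continuity giving $\zeta^{k,L}=\zeta^{k+1,R}$ and hence the doubled capacitances, and the linear bulk potentials yielding difference-quotient currents) into the matrix form. Watch your indexing when you write it out---at the internal electrode $x=-x_{k-1}$ the adjacent inner layers are $(k,R)$ and $(k-1,L)$, not $(k,L)$ and $(k-1,R)$, so the flux continuity reads $F^{k,R}=F^{k-1,L}$ and the sum gives $\alpha(\overline{j}_0^{k-1}-\overline{j}_0^{k})$; this is exactly the bookkeeping issue you anticipated.
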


\begin{proof}

Consider the leading-order total diffuse charge in each inner layer:
 \begin{align}
 \left\{
 \begin{aligned}
&\displaystyle \widehat{Q}_0^{k,L}= \sqrt{2}\sqrt{\displaystyle -z_-e^{-z_+\zeta^{k,L}}+z_+e^{-z_-\zeta^{k,L}}+z_--z_+}, ~k=1, \cdots, n,  \\
&\displaystyle \widehat{Q}_0^{k,R} = \sqrt{2}\sqrt{\displaystyle -z_-e^{-z_+\zeta^{k,R}}+z_+e^{-z_-\zeta^{k,R}}+z_--z_+}, ~k=2, \cdots, n,  \\
&\displaystyle \widehat{Q}_0^{k',R}= -\sqrt{2}\sqrt{\displaystyle -z_-e^{-z_+\zeta^{k',R}}+z_+e^{-z_-\zeta^{k',R}}+z_--z_+}, ~k=1, \cdots, n, \\
&\displaystyle \widehat{Q}_0^{k',L} = -\sqrt{2}\sqrt{\displaystyle -z_-e^{-z_+\zeta^{k',L}}+z_+e^{-z_-\zeta^{k',L}}+z_--z_+}, ~k=2, \cdots, n. 
\end{aligned}
\right.
 \end{align}
 For the total diffuse charge in the $(n,L)$ inner layer, one uses zero flux boundary condition and the flux matching between inner and outer solutions to get the following equation
\begin{align}
\frac{d \widehat{Q}^{n,L}_0}{dt} &=\int_0^{\infty} \left( z_{+}\frac{\partial\widehat{ c}^{n,L}_{+,0}}{\partial t}+z_{-}\frac{\partial\widehat{c}^{n,L}_{-,0}}{\partial t} \right) dy_{2n-1} = \alpha\overline{j}^{n}_0(t).  \label{eq:diff:nn:nL}
\end{align}
For the inner layers in $(k,L)$, $k=1, \cdots, n-1$, one has
\begin{align}
\frac{d \widehat{Q}^{k,L}_0}{dt} &=\int_0^{\infty} \left( z_{+}\frac{\partial\widehat{ c}^{k,L}_{+,0}}{\partial t}+z_{-}\frac{\partial\widehat{c}^{k,L}_{-,0}}{\partial t} \right) dy_{2k-1} = \alpha\overline{j}^{k}_0(t) -\lim_{y_{2k-1} \rightarrow 0}\frac{1}{\epsilon} F^{k,L},  \label{eq:diff:nn:kL}
\end{align}
where the flux $F^{k,L} = \frac{\partial \widehat{\rho}_0^{k,L}}{\partial y_{2k-1}}+\left(z_+^2\widehat{c}^{k,L}_{+,0}+z_{-}^2\widehat{c}^{k,L}_{-,0}\right) \frac{\partial \widehat{\Phi}_0^{k,L}}{ \partial y_{2k-1}}$. For inner layers in $(k,R)$, $k=2, \cdots, n$, one can derive
\begin{align}
\frac{d \widehat{Q}^{k,R}_0}{dt} &=\int^0_{-\infty} \left( z_{+}\frac{\partial\widehat{ c}^{k,R}_{+,0}}{\partial t}+z_{-}\frac{\partial\widehat{c}^{k,R}_{-,0}}{\partial t} \right) dy_{2(k-1)} = -\alpha\overline{j}^{k}_0(t) +\lim_{y_{2(k-1)} \rightarrow 0}\frac{1}{\epsilon}F^{k,R},  \label{eq:diff:nn:kR}
\end{align}
where the flux $F^{k,R}$ is analogously defined. The flux continuity gives
$$\lim_{y_{2(k-1)} \rightarrow 0} F^{k,R} = \lim_{y_{2k-1} \rightarrow 0} F^{k,L},~ k=1, \cdots, n-1.$$
The ion concentration continuity at each internal electrode leads to
 $$\zeta^{k,L} =\zeta^{k+1,R}, ~k=1,\cdots, n.$$ 
 Adding Eqs.~\eqref{eq:diff:nn:kL} and\eqref{eq:diff:nn:kR} together results in
\begin{align}
\frac{d \widehat{Q}^{k,L}_0}{dt} +\frac{d \widehat{Q}^{k+1,R}_0}{dt} = \alpha\left[ \overline{j}^{k}_0(t)-\overline{j}^{k+1}_0(t)\right].
\end{align}
Therefore, there are $n+1$ unknown potential drops: $\zeta^{1,R}$ and  $\zeta^{k,L}$ for $k=1, \cdots, n$. It should be noted that the currents can be represented by corresponding zeta potentials:
\begin{align}
\left\{
\begin{aligned}
\overline{j}^{1}_0(t)&=\frac{\zeta^{1,L} - \zeta^{1,R}-V_-+V_+}{2\calL},\\
\overline{j}^{k}_0(t)&=\frac{\zeta^{k,L} -\zeta^{k,R}}{h}=\frac{\zeta^{k,L} -\zeta^{k-1,L}}{h},  k=2, \cdots, n.
\end{aligned}
\right.
\end{align}

For the right sub-domains, one denotes the flux as $F^{k',L}$ and $F^{k',R}$, and uses the flux continuity and ion concentration continuity at each internal electrode in the sub-domain to derive following equations
\begin{align}
\left\{
\begin{aligned}
&\frac{d \widehat{Q}^{n',R}_0}{dt} =-\alpha\overline{j}^{n'}_0(t), \\
&\frac{d \widehat{Q}^{k',R}_0}{dt} +\frac{d \widehat{Q}^{k'+1,L}_0}{dt} = \alpha\left[ -\overline{j}^{k'}_0(t)+\overline{j}^{k'+1}_0(t)\right],~ k' =1, \cdots, n-1,
\end{aligned}
\right.
\end{align}
where the currents can be represented as
\begin{align}
\left\{
\begin{aligned}
\overline{j}^{1'}_0(t)&=\frac{\zeta^{1,L} - \zeta^{1,R}-V_-+V_+}{2\calL},\\
\overline{j}^{k'}_0(t)&=\frac{\zeta^{k',L} -\zeta^{k',R}}{h}=\frac{\zeta^{k'-1,R} -\zeta^{k',R}}{h}, k'=2, \cdots, n.
\end{aligned}
\right.
\end{align}
Therefore, there are another $n-1$ unknown zeta potentials $\zeta^{k',R}$ for $k=2, \cdots, n$.

Combining the right and left half systems together, {one obtains the following ODE system that involves $2n$ unknown zeta potentials:}
\begin{align}
\begin{cases}
& \displaystyle -C^{n,L}\frac{d\zeta^{n,L}}{dt} =\displaystyle \alpha \frac{\zeta^{n,L} -\zeta^{n-1,L}}{h},\\
&\displaystyle -2C^{k,L}\frac{d\zeta^{k,L}}{dt} = -\alpha\frac{\zeta^{k-1,L} -2\zeta^{k,L}+\zeta^{k+1,L}}{h} ,~ k=2, \cdots, n-1,\\
&\displaystyle -2C^{1,L}\frac{d\zeta^{1,L}}{dt}=\alpha\left(\frac{\zeta^{1,L} -\zeta^{1',R}-V_-+V_+}{2\calL} - \frac{\zeta^{2,L} -\zeta^{1,L}}{h} \right),\\
&\displaystyle -2C^{1',R}\frac{d\zeta^{1',R}}{dt} = \alpha\left(\frac{\zeta^{1',R} -\zeta^{2',R}}{h} + \frac{-\zeta^{1,L} +\zeta^{1',R}+V_--V_+}{2\calL}\right),\\
&\displaystyle -2C^{k',R}\frac{d\zeta^{k',R}}{dt}= \alpha
\frac{-\zeta^{k'-1,R} +2\zeta^{k',R}-\zeta^{k'+1,R}}{h}, ~ k'=2, \cdots, n-1,\\
& \displaystyle -C^{n',R}\frac{d\zeta^{n',R}}{dt} = - \alpha\frac{\zeta^{n'-1,R} -\zeta^{n',R}}{h},
\end{cases}
\end{align}
with zero initial conditions for zeta potentials.
Alternatively, one can display these equations in a matrix form, which completes the proof.

\end{proof}

\begin{remark}
\textit{
In particular,  we remark that the derived ODE system~\reff{eq:matrix:form} has a similar form as an equivalent circuit model developed in the work~\cite{lian2020blessing}, if the same dimensionless parameters are used. In contrast to the coefficients that are treated as fitting parameters in the model, the coefficients in our model, such as the differential capacitances and ionic valences, are derived from the asymptotic analysis of the PNP equations with physical interpretations. In addition, the derived ODE system~\reff{eq:matrix:form} is able to describe asymmetric electrolytes, and takes nonlinear response into account by nonlinear potential-dependent differential capacitances. While the ionic and potential distributions next to electrodes are not available in the equivalent circuit model, our generalized model demonstrates that they can be found by further solving the PB equations.
}
\end{remark}

\begin{remark} 
\textit{
 {The MAE method is also applicable to modified PNP equations with steric and inhomogeneous dielectric effects. Modified PB equations with steric and inhomogeneous dielectric effects can be derived in replacement of \reff{eq:IISolution:nn:kprimeL} and \reff{eq:IISolution:nn:kprimeR}. }
 }
 \end{remark}

\section{Timescale analysis}\label{sec:generalizedRCtime}

\subsection{Leading-order approximation}
The leading-order inner and outer solutions are connected by the ODE system~\reff{eq:matrix:form}, which dictates the charging timescales of the parallel stack-electrode model. Following the terminology used in the work~\cite{lian2020blessing}, we call the charging timescale as the generalized RC timescale $\tau_{n}$, which is found by solving an eigenvalue problem. We first have the following Lemma on the eigenvalues of ${C(\zeta^\infty)}^{-1}\mathcal{T}$, with $\zeta^\infty$ being the equilibrium solution of the system \eqref{eq:matrix:form}.
\begin{lemma}\label{lemma:SimpleEigs}
The matrix ${C(\zeta^\infty)}^{-1}\mathcal{T}$ has one zero eigenvalue and $2n-1$ distinct positive eigenvalues.
\end{lemma}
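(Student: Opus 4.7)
The plan is to reduce the nonsymmetric eigenvalue problem for $C(\zeta^{\infty})^{-1}\mathcal{T}$ to a symmetric one by a diagonal similarity, and then to exploit the unreduced tridiagonal structure of the symmetrized matrix to get simplicity of the spectrum.

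First I would establish that $\mathcal{T}$ itself is symmetric and positive semidefinite with a one-dimensional kernel. Symmetry is read off directly from the displayed pattern of entries, and every row of $\mathcal{T}$ sums to zero by construction (each diagonal entry equals the negative of the sum of its off-diagonals), so $\mathcal{T}\mathbf{1}=0$. A standard summation-by-parts identity for tridiagonal graph-Laplacians then yields
\begin{equation*}
v^{T}\mathcal{T} v = \frac{1}{h}\sum_{\substack{1\le k\le 2n-1\\ k\neq n}}(v_{k}-v_{k+1})^{2} + \frac{1}{2\mathcal{L}}(v_{n}-v_{n+1})^{2} \ge 0,
\end{equation*}
with equality iff all components of $v$ coincide. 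Hence $\mathcal{T}\succeq 0$ and $\ker\mathcal{T}=\mathrm{span}(\mathbf{1})$.

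Next I would verify that the diagonal capacitance matrix $C:=C(\zeta^{\infty})$ has strictly positive entries, so that $C^{\pm 1/2}$ are well defined. From the explicit formula for $C(u)$, for $u>0$ one has $\mathrm{sgn}(u)=1$ and $z_{+}z_{-}(e^{-z_{+}u}-e^{-z_{-}u})>0$ (both factors are negative), yielding $C(u)>0$; the case $u<0$ is symmetric, and $C(0)>0$ follows by L'H\^opital. The similarity
\begin{equation*}
C^{-1}\mathcal{T} = C^{-1/2}\bigl(C^{-1/2}\mathcal{T}\,C^{-1/2}\bigr)\,C^{1/2}
\end{equation*}
then shows that $C^{-1}\mathcal{T}$ is isospectral to the symmetric matrix $M:=C^{-1/2}\mathcal{T}\,C^{-1/2}$. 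Since $v^{T}Mv=(C^{-1/2}v)^{T}\mathcal{T}(C^{-1/2}v)\ge 0$ with equality iff $C^{-1/2}v\propto\mathbf{1}$, the matrix $M$ is positive semidefinite with one-dimensional kernel $\mathrm{span}(C^{1/2}\mathbf{1})$. Consequently $C^{-1}\mathcal{T}$ has $2n$ real eigenvalues, exactly one of which is zero and the remaining $2n-1$ strictly positive.

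Finally, for distinctness I would use that the symmetrization preserves the tridiagonal sparsity pattern of $\mathcal{T}$, so the off-diagonals of $M$ are $-1/(h\sqrt{C_{i}C_{i+1}})$ (with the single middle entry $-1/(2\mathcal{L}\sqrt{C_{n}C_{n+1}})$), all nonzero thanks to positivity of $C$. Thus $M$ is an unreduced real symmetric tridiagonal matrix, and by the classical Sturm-sequence argument for the three-term recurrence satisfied by its characteristic polynomial, its $2n$ eigenvalues are all simple. Combined with the PSD analysis above, this gives exactly one simple zero eigenvalue and $2n-1$ simple positive eigenvalues of $C^{-1}\mathcal{T}$. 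The main obstacle is really this last step: one must be careful to check that the similarity does not accidentally annihilate any off-diagonal entry, but once positivity of the diagonal of $C$ is in hand the irreducibility of $M$ is automatic and the classical tridiagonal simplicity result applies.
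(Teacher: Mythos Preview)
Your proof is correct and follows essentially the same route as the paper: both symmetrize $C^{-1}\mathcal{T}$ via a diagonal similarity to an unreduced symmetric tridiagonal matrix and invoke the classical fact that such matrices have simple spectrum. The only minor difference is that the paper appeals to the Gershgorin circle theorem for nonnegativity of the eigenvalues whereas you obtain it (together with the one-dimensional kernel) directly from the Laplacian quadratic form $v^{T}\mathcal{T}v$; your argument is the more detailed and self-contained of the two.
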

\begin{proof}
It is easy to know that $0$ is an eigenvalue to the matrix ${C(\zeta^\infty)}^{-1}\mathcal{T}$. By the Gershgorin circle theorem, one finds that all eigenvalues of the matrix are non-negative.  Also, it is readily to show that the matrix is similar to a symmetric tridiagonal matrix whose eigenvalues are distinct. Thus, the proof is complete.
\end{proof}
\begin{theorem}\label{thm:generalized RCtime}
The timescale of the charging process governed by the ODE system \reff{eq:matrix:form} is given by 
\begin{align}
\tau_n =\frac{1}{\lambda_c},
\end{align}
where $\lambda_c$ is the smallest positive eigenvalue of the matrix $\alpha {C(\zeta^\infty)}^{-1}\mathcal{T}$.

\end{theorem}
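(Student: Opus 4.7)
The plan is to linearize the nonlinear ODE system~\reff{eq:matrix:form} about its equilibrium, project out the null direction identified in Lemma~\ref{lemma:SimpleEigs}, and extract the charging timescale from the slowest surviving eigenmode of the resulting linear system.

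First, I would establish a well-defined equilibrium $\zeta^\infty$ by setting $\dot\zeta = 0$ in~\reff{eq:matrix:form}, which reduces to the linear algebraic system $\mathcal{T}\zeta^\infty = Y$. This system is consistent because $\mathbf{1}^T Y = 0$ and $\mathbf{1}^T\mathcal{T} = 0$ (both are direct consequences of the discrete-Laplacian row-sum structure of $\mathcal{T}$), but admits a one-parameter family of solutions since $\mathcal{T}$ has a one-dimensional null space spanned by $\mathbf{1}$. The degeneracy is resolved by the conservation law obtained from left-multiplying~\reff{eq:matrix:form} by $\mathbf{1}^T$, namely $\mathbf{1}^T\bm{C}(\zeta)\dot\zeta = 0$, equivalently $\frac{d}{dt}\sum_k \widehat{Q}_0^k(\zeta) = 0$, which together with the zero initial conditions for $\zeta$ selects a unique physical $\zeta^\infty$.

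Next, I would set $\eta(t) = \zeta(t) - \zeta^\infty$ and Taylor-expand~\reff{eq:matrix:form} to first order in $\eta$. Since $C(\cdot)$ is smooth and strictly positive by~\reff{DC}, this yields
\begin{equation*}
\dot\eta = -M\eta + O(\|\eta\|^2), \qquad M := \alpha\,\bm{C}(\zeta^\infty)^{-1}\mathcal{T}.
\end{equation*}
By Lemma~\ref{lemma:SimpleEigs}, $M$ is diagonalizable with spectrum $\{0, \lambda_c, \lambda_2, \ldots, \lambda_{2n-1}\}$, where $0 < \lambda_c < \lambda_2 < \cdots < \lambda_{2n-1}$, and with null eigenvector $v_0 = \mathbf{1}$. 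Expanding $\eta(t) = \sum_{j} a_j(t) v_j$ in the eigenbasis of $M$ then gives $a_j(t) = a_j(0) e^{-\lambda_j t}$. Left-multiplying $\dot\eta = -M\eta$ by the left null vector $\mathbf{1}^T \bm{C}(\zeta^\infty)$ of $M$ shows that $\mathbf{1}^T \bm{C}(\zeta^\infty)\eta(t)$ is conserved in time; this quantity is precisely the first-order Taylor expansion of the exact conservation law about $\zeta^\infty$ (using $d\widehat{Q}_0/d\zeta = -C$), and so the component $a_0$ along the null direction vanishes by the construction of $\zeta^\infty$.

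With $a_0 \equiv 0$, the projected linear system reads $\eta(t) = \sum_{j=1}^{2n-1} a_j(0) e^{-\lambda_j t} v_j$, whose long-time decay is dominated by the mode $e^{-\lambda_c t}$, giving $\tau_n = 1/\lambda_c$ as claimed. The chief obstacle I expect is the projection step: reconciling the exact nonlinear conservation law $\sum_k \widehat{Q}_0^k(\zeta) \equiv 0$ with the linear conserved functional $\mathbf{1}^T \bm{C}(\zeta^\infty)\eta \equiv 0$ to leading order, and thereby guaranteeing that the zero eigenvalue cannot contaminate the asymptotic decay rate. Once this reconciliation is in hand, the rest of the argument is a standard eigenvalue decomposition.
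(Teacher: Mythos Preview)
Your proposal is correct and follows the same overall strategy as the paper: linearize~\reff{eq:matrix:form} about the equilibrium $\zeta^\infty$ determined by $\mathcal{T}\zeta^\infty=Y$, remove the zero-eigenvalue direction supplied by Lemma~\ref{lemma:SimpleEigs}, and read off the timescale from the slowest surviving mode of $\alpha\,\bm{C}(\zeta^\infty)^{-1}\mathcal{T}$. The execution differs in one useful respect. The paper builds an explicit projection matrix $\bm{P}=\bm{S}\bm{Q}^{-1}$, with $\bm{S}$ a $(2n-1)\times 2n$ shift matrix and $\bm{Q}$ the eigenvector matrix of $\bm{C}(\zeta^\infty)^{-1}\mathcal{T}$, applies $\bm{P}$ to the linearized system, and then uses the identity $\bm{D}=\bm{D}\bm{S}^T\bm{S}$ to obtain a reduced system $\dot\mu=-\alpha\,\bm{S}\bm{D}\bm{S}^T\mu$ whose smallest eigenvalue is $\lambda_c$. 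You instead exploit the column-sum structure $\mathbf{1}^T\mathcal{T}=0$, $\mathbf{1}^T Y=0$ to identify the exact conservation law $\mathbf{1}^T\bm{C}(\zeta)\dot\zeta=0$ (total diffuse charge), recognize $\mathbf{1}^T\bm{C}(\zeta^\infty)$ as the left null vector of the linearized matrix, and use this pairing directly to annihilate the coefficient $a_0$ in the eigenbasis expansion. Your route is more elementary and more transparent---it explains \emph{why} the null direction is dynamically inaccessible (charge conservation pins it) rather than excising it by an algebraic construction---while the paper's matrix manipulation is somewhat more formal but reaches the identical conclusion.
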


\begin{proof}
The equilibrium solution $\zeta^{\infty}$ of the system~\reff{eq:matrix:form} is uniquely determined by the initial conditions and $\mathcal{T}\zeta^{\infty} =Y$. It follows from the Lemma~\ref{lemma:SimpleEigs} that ${C(\zeta^\infty)}^{-1}\mathcal{T}$ has one zero eigenvalue and $2n-1$ distinct positive eigenvalues. Infinitesimal perturbation of the equilibrium state satisfies a linearized system of the ODE system~\reff{eq:matrix:form}, and would not damp in the direction of the eigenvector of ${C(\zeta^\infty)}^{-1}\mathcal{T}$ corresponding to the zero eigenvalue. To study the relaxation timescale around the equilibrium state, we first project the ODE system~\reff{eq:matrix:form} to a subspace generated by the rest of eigenvectors. The relaxation timescale, described by the projected ODE system, can be used to estimate the charging timescale of the original ODE system.   

Since ${C(\zeta^\infty)}^{-1}\mathcal{T}$ has distinct eigenvalues, one has the diagonalization
\[
 \bm{Q}^{-1} {C(\zeta^\infty)}^{-1} \mathcal{T}  \bm{Q}= \bm{D} ,
\]
where $\bm{Q}=(v_1, v_2, \cdots, v_{2n})$ is an invertible matrix with $v_k$, $k=1, \cdots, 2n$, being the eigenvectors of ${C(\zeta^\infty)}^{-1}\mathcal{T}$, and $\bm{D}=\text{diag}(\lambda_1, \lambda_2, \cdots, \lambda_{2n})$ with the $\lambda_1=0$ and $\lambda_k>0$, $k=2, \cdots, 2n$, being the corresponding eigenvalues. Define a projection matrix $\bm{P}=\bm{S} \bm{Q}^{-1}$, with 
\[
\bm{S}=\begin{pmatrix}
0 & 1 & & & &  \\
   & 0 & 1& & &   \\
   &    & \ddots & \ddots &   \\
   &    &            &0 &1 & \\
   &    &            &   &0 &1
\end{pmatrix}_{(2n-1) \times 2n}.
\] 
Applying the projection matrix, one has 
\begin{equation}\label{Projected}
\bm{P} \dot{\zeta} =\alpha \left[-\bm{P} \bm{C}(\zeta)^{-1}\mathcal{T}\zeta + \bm{P} \bm{C}(\zeta)^{-1}Y\right].
\end{equation}
By the Hartman-Grobman theorem, the dynamics of a nonlinear system near a hyperbolic equilibrium point can be studied through linearized equations. Consider an infinitesimal perturbation of the equilibrium state: $\zeta^{\delta} (t)=\zeta^{\infty} + \delta q(t)$, where $\delta$ is a small number. Substituting $\zeta^{\delta}$ into \reff{Projected} and using the Taylor expansion, one obtains a linearized system around $\zeta^{\infty}$ by ignoring higher order terms:
\[
\bm{P} \dot{q} = -\alpha  \bm{P} \bm{C}(\zeta^\infty)^{-1} \mathcal{T} q.
\]
By the fact that $\bm{D}= \bm{D} \bm{S}^T \bm{S}$, one has
\[
\begin{aligned}
\bm{P} \dot{q}=  -\alpha  \bm{P}  \bm{Q} \bm{D}  \bm{S}^T \bm{P}  q.\\
\end{aligned}
\]
Let $\mu(t)= \bm{P}  q(t)$. Thus, the damping of the projected perturbation $\mu$ is governed by 
\begin{equation}\label{EigProb}
\begin{aligned}
\dot{\mu}= -\alpha \bm{P} \bm{Q}  \bm{D} \bm{S}^T \mu =-(\alpha  \bm{S} \bm{D} \bm{S}^T) \mu :=-\bm{M} \mu. 
\end{aligned}
\end{equation}
Thus the timescale of the charging process $\tau_n$ is given by $\tau_n =\frac{1}{\lambda_c}$, where $\lambda_c$ is the smallest eigenvalue of $\bm{M}$.
By the definition of the matrix $\bm{S}$, one finds that $\lambda_c$ is the smallest positive eigenvalue of the matrix $\alpha {C(\zeta^\infty)}^{-1} \mathcal{T}$. This completes the proof.
\end{proof}

\begin{remark}
\textit{
For a symmetric system, one can find that $\bm{C}(\zeta^\infty)$ is a scalar matrix. 
As a special case, when $h=2\calL$, i.e., {all stacks are equally spaced in the whole device}, we find that the smallest eigenvalue of $\bm{M}$ is given by
\[
\lambda_c= \frac{\alpha}{C(V_+)} \left[ 2 - 2 \cos(\frac{\pi}{2n}) \right] \sim \frac{\pi^2\alpha}{4n^2C(V_+)}=\frac{\pi^2 z_+^3}{2n^2C(V_+)},~ \text{as}~ n \to +\infty.
\]
Thus, the timescale of charging $\tau_{n}$ in such a scenario is equal to $\left(2n^2C(V_+)\right)/(\pi^2 z_+^3) \tau_c$. Comparing with the two-plate charging timescale $C(V_+)/(2z_+^3) \tau_c$, one finds that the charging process slows down significantly as the number of stack electrodes increases.}
\end{remark}
{The $\mathcal{T}$ matrix in the ODE system \reff{eq:matrix:form} resembles the difference matrix discretized from the Laplacian operator in 1D~\cite{janssen2021transmission}. Such resemblance helps to establish the connection between the stack-electrode model and transmission line (TL) type of models~\cite{de1963porous, GuptaZukStone_PRL2020, henrique2022charging, henrique2022impact}.  A linear timescale scaling on the number of stacks is predicted for large $n$~\cite{janssen2021transmission, lian2020blessing}.}

\subsection{Diffusion relaxation timescale} 
\label{sec:diffusiontime}
In addition to the timescale found in the leading-order approximation, another timescale emerges in numerical simulations of the PNP equations with large applied voltages~\cite{lian2020blessing}. Here, we use technique developed in~\cite{BTA:PRE:04} to discuss the existence of a second charging timescale, i.e., the diffusion relaxation timescale.

\textbf{The $n=1$ case.} Consider the excess salt of inner layer compared to the bulk area for a symmetric salt with $z_+ =-z_- =z$. Introduce one variable $c =z_+c_+ - z_-c-$, which represents the local salt concentration. One can re-formulate the original PNP equations with $c$ and $\rho$.
For the inner solutions, we use the scale transformation $y = (x+1)/\epsilon$ and reformulate the NP equations as follows: 
\begin{align}
\left\{
\begin{aligned}
&\epsilon \frac{\partial \widehat{c}}{\partial t} = \frac{\partial}{\partial y}\left(\frac{\partial \widehat{c}}{\partial y} +z\widehat{\rho}\frac{\partial \widehat{\Phi}}{\partial y}\right),\\
&\epsilon \frac{\partial \widehat{\rho}}{\partial t} = \frac{\partial}{\partial y}\left(\frac{\partial \widehat{\rho}}{\partial y}+z\widehat{c}\frac{\partial \widehat{\Phi}}{\partial y}\right),\\
%%%&-\frac{\partial^2 \widehat{\Phi}}{\partial y^2} = \widehat{\rho}.
\end{aligned}
\right.
\end{align}
Consider the total excess concentration $
\widehat{w}(t) = \int_0^\infty [\widehat{c}(y,t) - \overline{c}_0(-x_1,t)]dy
$. It follows from the matching of fluxes~\cite{BTA:PRE:04} that
\begin{align*}
\frac{d\widehat{w}}{dt} = &\int_0^\infty \frac{\partial \widehat{c}}{\partial t} dy = \lim_{x\rightarrow -1} \left( \frac{\partial \overline{c}}{\partial x}+ z\overline{\rho} \frac{\partial \overline{\Phi}}{ \partial x} \right).
\end{align*}
Following the analysis in Sect.~\ref{sec:AsympSolutions}, one has
\begin{align*}
\frac{d\widehat{w}_0}{dt} = \lim_{x\rightarrow -1} \epsilon \frac{\partial \overline{c}_1}{\partial x},
\end{align*}
where one has used the fact that the bulk charge density is zero up to order $O(\epsilon^2)$. 
This indicates that there exists a new, longer timescale $\overline{\tau}=\tau_c/\epsilon = D^2/\mathcal{D}_0$~\cite{BTA:PRE:04}.

\textbf{The $n=2$ case.} Similar to the $n=1$ case, we consider the total excess concentration in each inner layer for a symmetric system and use flux matching condition to derive
\begin{align}
\left\{
\begin{aligned}
&2 \frac{d\widehat{w}_0^{1,L}}{dt}=\lim_{x\rightarrow -\calL} \left( \epsilon \frac{ \partial\overline{c}_1^{1}}{\partial x}- \epsilon \frac{ \partial\overline{c}_1^{2}}{\partial x} \right),\\
&\frac{d\widehat{w}_0^{2,L}}{dt} = \lim_{x\rightarrow -1} \epsilon \frac{ \partial\overline{c}_1^{2}}{\partial x},
\end{aligned}
\right.
\end{align}
where one has used the fact that
$\widehat{w}_0^{1,L} = \widehat{w}_0^{2,R}$, due to the continuity of ion concentrations across the electrode. 
Analytical solution to this ODE system is not available~\cite{BTA:PRE:04}. However, similar to the analysis for the $n=1$ case, this ODE system indicates the existence of the new timescale $\overline{\tau}$.

For general cases with $n>2$, we can extend the analysis for $n=2$ to obtain a large ODE system with $n$ unknowns, which is omitted here for simplicity. The large ODE system again indicates the emergence of the second timescale $\overline{\tau}$. Note that this is the diffusion timescale and is independent of the number of electrode stacks $n$. The $n$-independence of such diffusion timescale has also been confirmed in Refs.~\cite{lian2020blessing,janssen2021transmission}, by fitting against numerical simulations of the PNP equations.

\section{Numerical results}
\label{sec:numerical:results}

Our asymptotic analysis for the parallel stack-electrode model is validated by comparing with the finite-difference solution in asymmetric and symmetric systems. The charging timescale is analyzed by numerically solving the eigenvalue problem~\reff{EigProb} obtained by the leading-order approximation. Finally, simulations are performed to verify the existence of a longer timescale $\overline{\tau}$ revealed by our asymptotic analysis in Section \ref{sec:diffusiontime}.

\subsection{Accuracy and dynamics}
We evaluate the accuracy of the leading-order approximation of the MAE against direct numerical solutions of PNP equations via a finite difference method~\cite{LJX:SIAP:2018}, which is second-order accurate in space and time.  Consider charging dynamics of a symmetric salt with $z_+ = -z_- =1$ under a low applied potential, $V_\pm = \pm 0.2$, with parameters $n=5$, $H = \calL =0.5$, $h=0.125$, and $\epsilon=0.005$, for which the MAE method is valid. In order to investigate the full dynamics of the parallel electrode-stack model, the time-dependent matching approach in Section~\ref{sec:AsympSolutions} is used to obtain asymptotic solution up to $T=60$, at which the system has reached equilibrium. The relative errors between the asymptotic solution and finite-difference solution of ionic concentrations and the potential distribution are shown in Fig.~\ref{fig:symmetric:com} (a). The relative errors show that the leading-order approximation of the MAE has quite good accuracy. Snapshots of the electrostatic potential distribution at different times in Fig.~\ref{fig:symmetric:com} (b) demonstrate that the asymptotic solution can accurately capture the whole charging dynamics, even in the sharp transitions in boundary layers.

\begin{figure}[H]
  \centering
\includegraphics[width=0.4\textwidth]{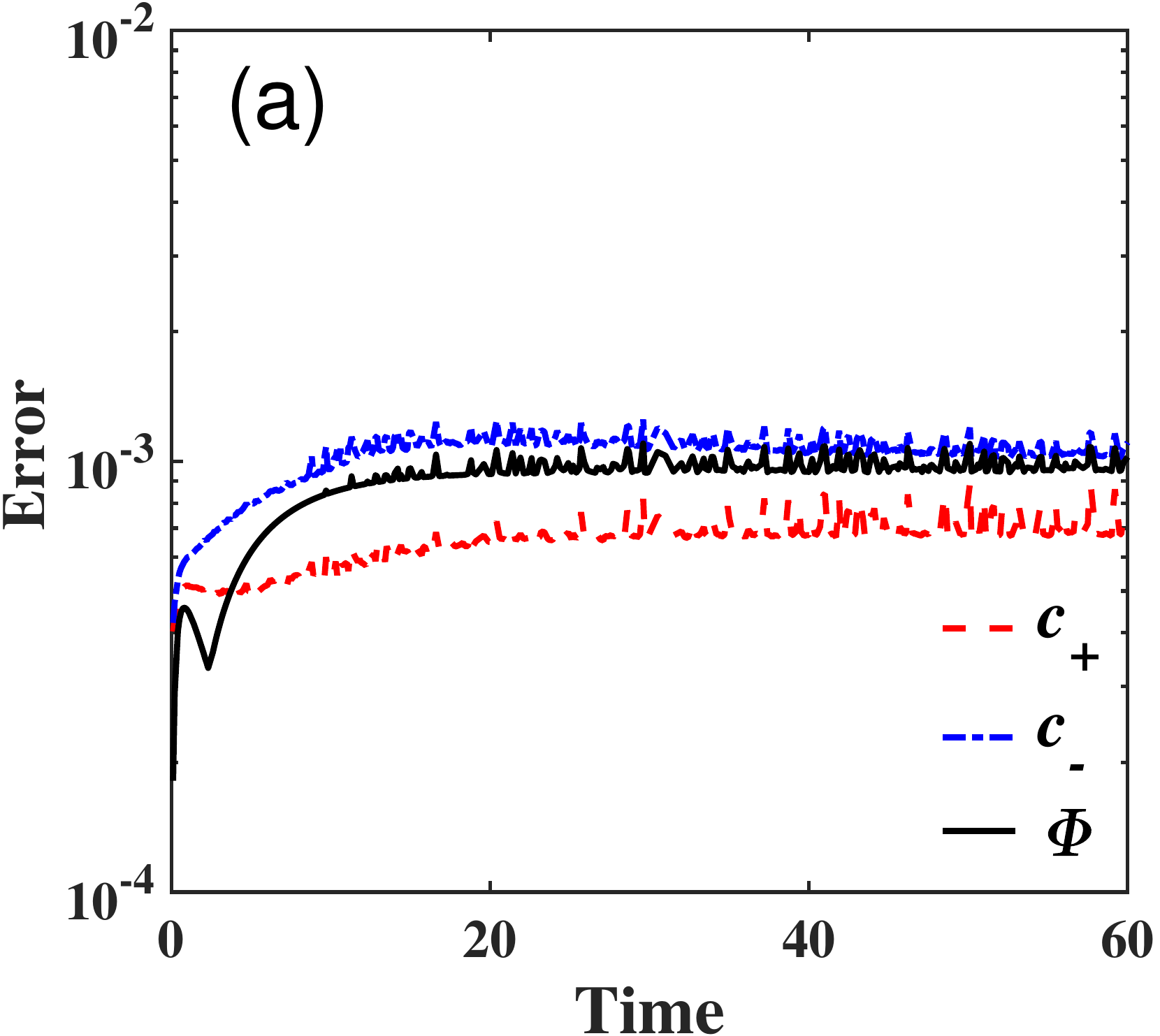}
  \includegraphics[width=0.395\textwidth]{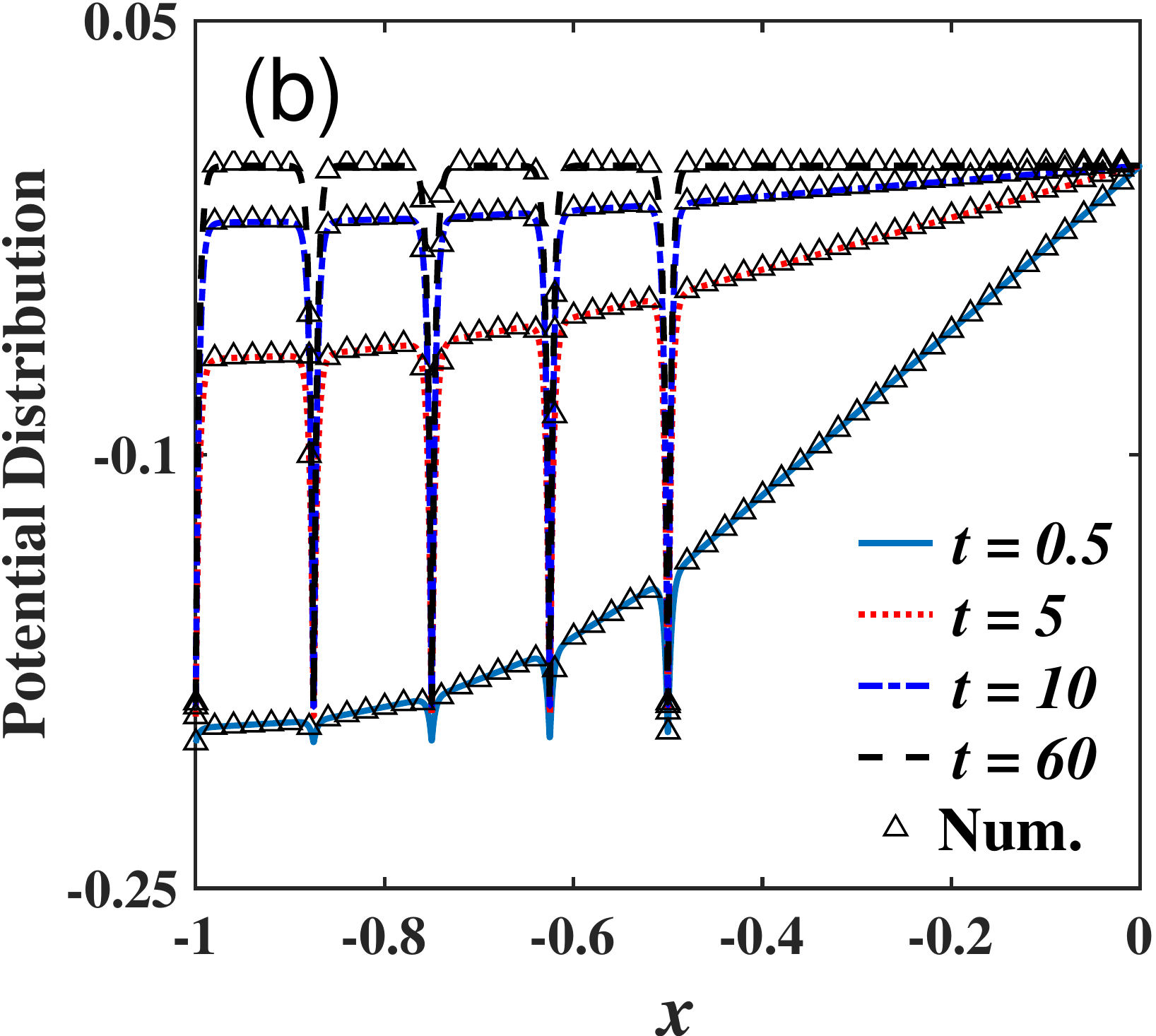}\\
   \includegraphics[width=0.4\textwidth]{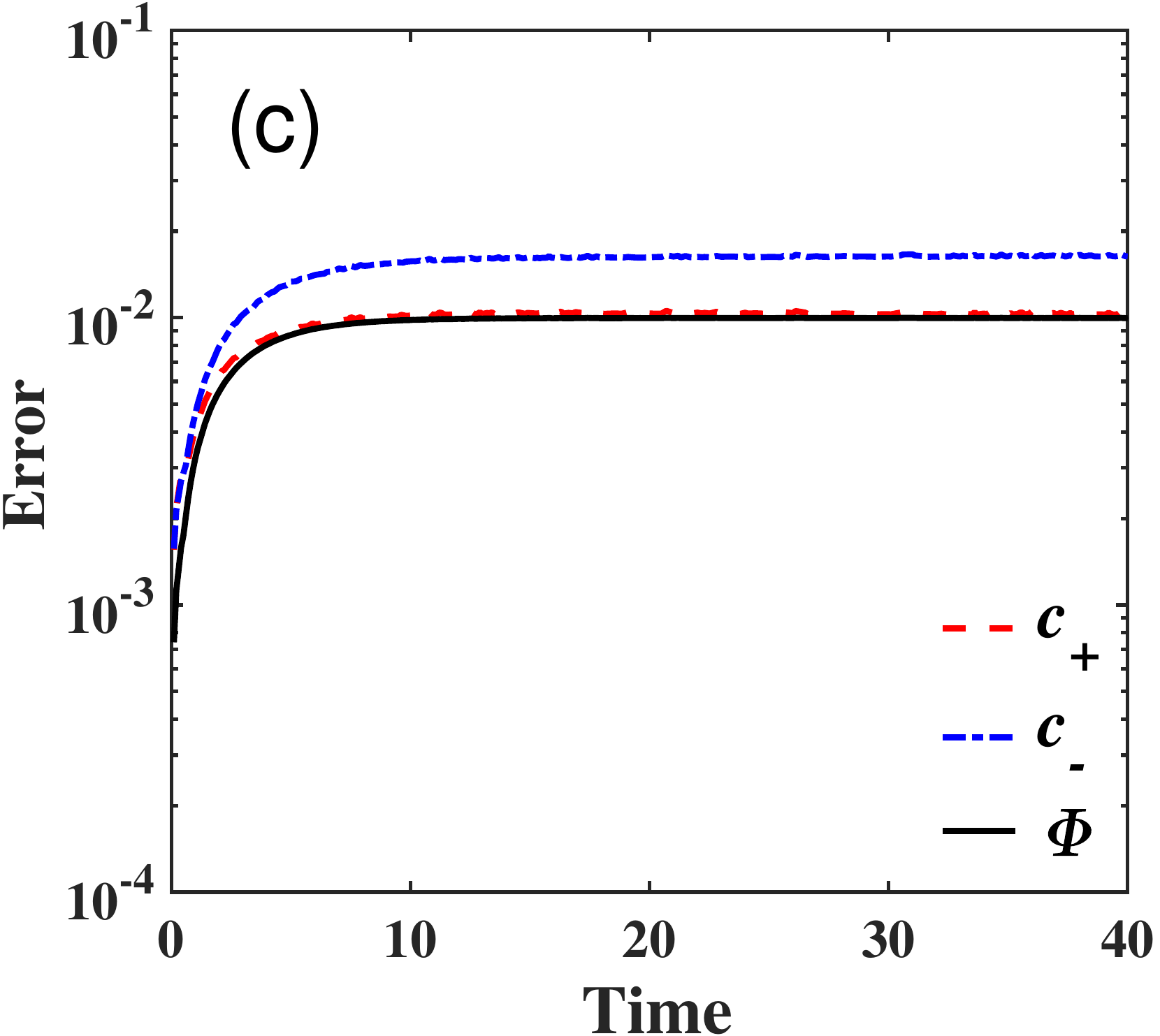}
  \includegraphics[width=0.395\textwidth]{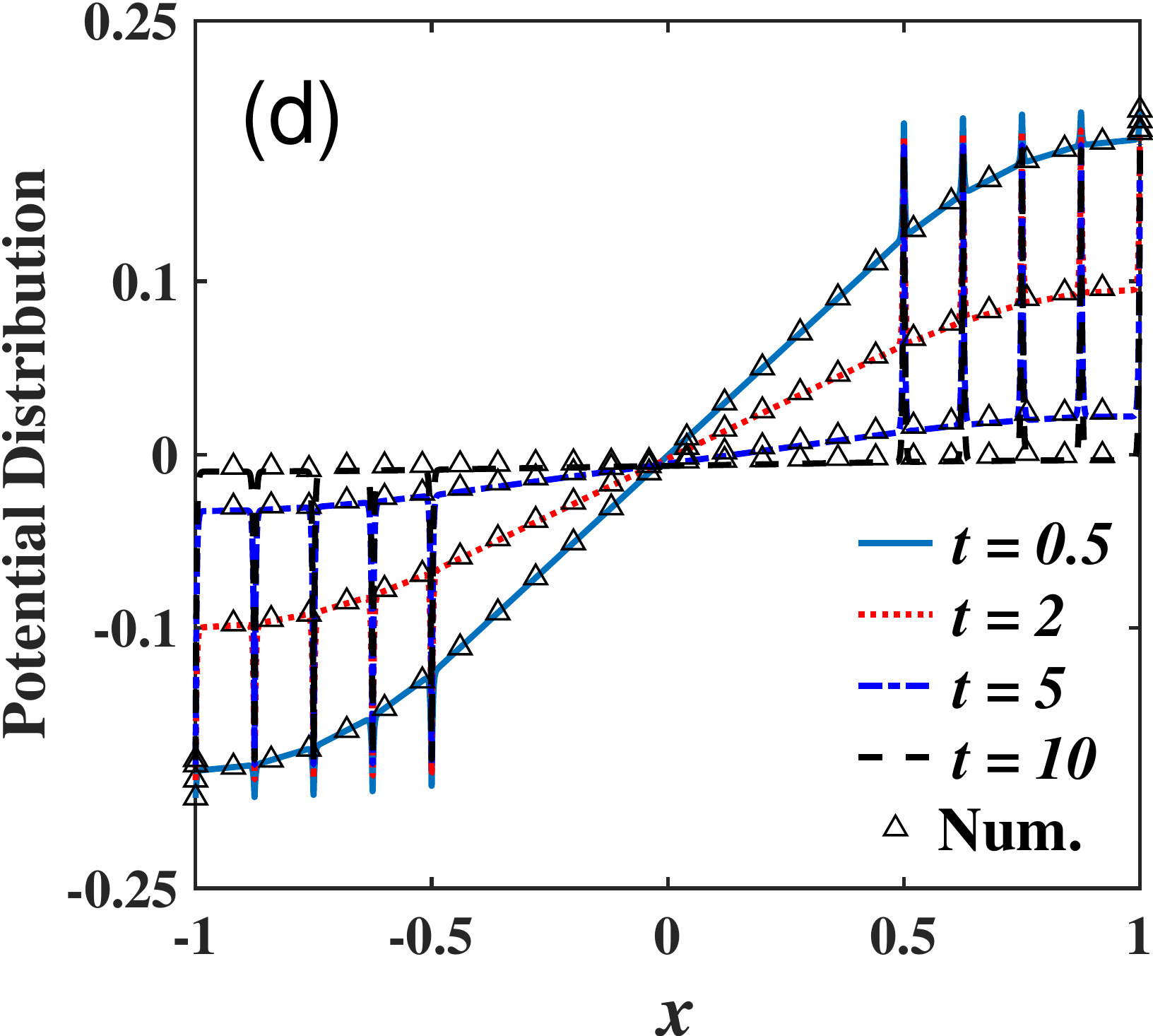}
\caption{($a$) and ($c$) Relative errors between the asymptotic solution and finite-difference solution of ionic concentrations and the potential distribution. ($b$) and ($d$) Snapshots of the electrostatic potential in the left half domain at different time $t$, obtained by the asymptotic solution (lines) and finite-difference solution (triangles). ($a$, $b$) and ($c$, $d$) are results of the symmetric and asymmetric salts, respectively. }
\label{fig:symmetric:com}
\end{figure}

For an asymmetric system, binary electrolytes with $z_+=+2$ and $z_-=-1$ is considered. Other parameters remains same as those in symmetric case. Since there is no symmetry for the left and right halves, the potential distribution and ion concentrations in the whole system is investigated. Clearly, one can see from Fig. \ref{fig:symmetric:com} (c) that the error between the leading-order asymptotic solution and finite-difference solution grows a bit in the initial stage and reaches a plateau at a small value. As displayed in Fig. \ref{fig:symmetric:com} (d), the electrostatic potential obtained with both methods has a good agreement throughout the whole charging process.  Asymmetry is faithfully captured by the leading-order asymptotic solution, and becomes more obvious at the late stage, e.g., $t=10$.

\begin{figure}[htbp]
  \centering
\includegraphics[width=0.47\textwidth]{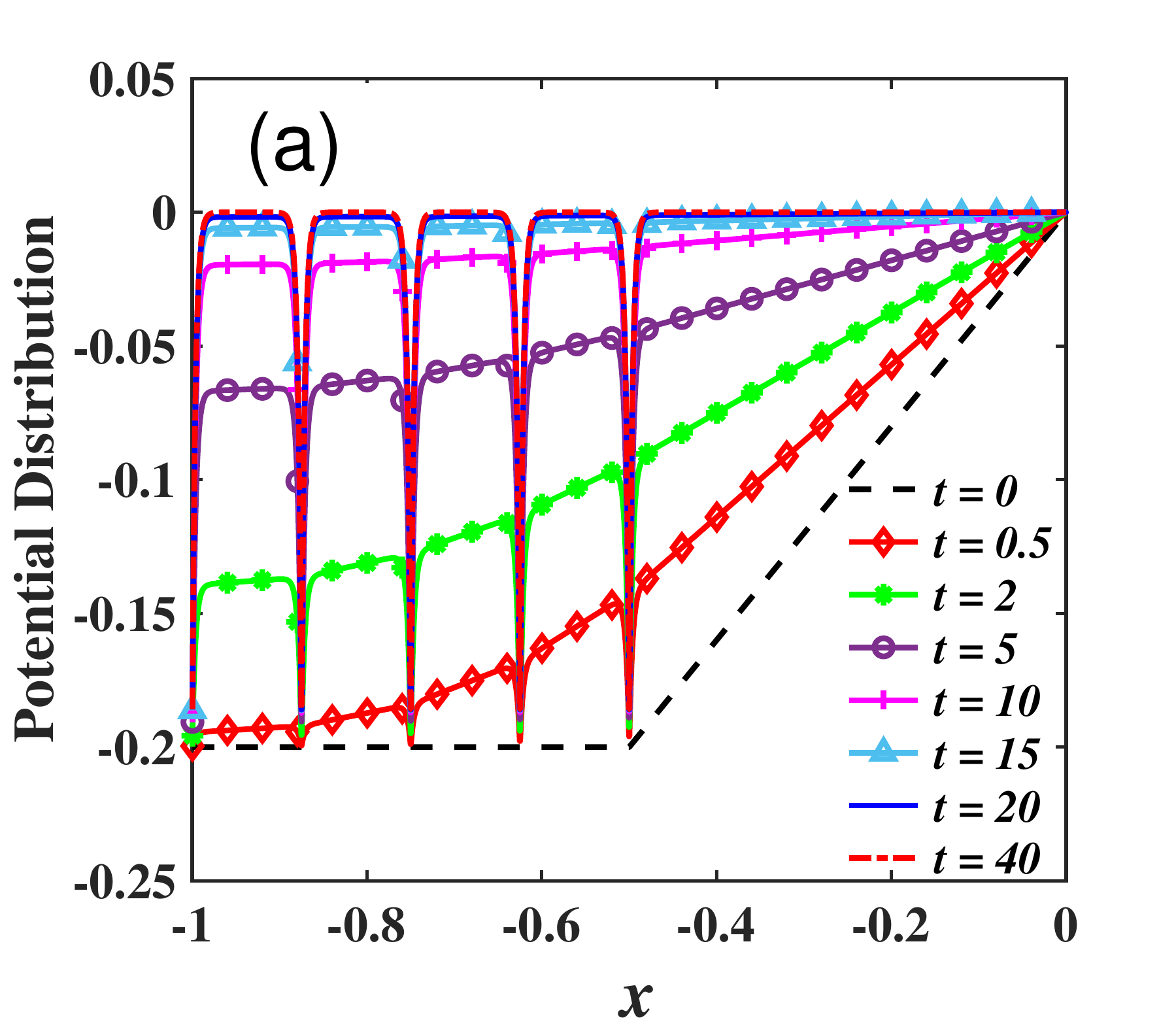}
\includegraphics[width=0.42\textwidth]{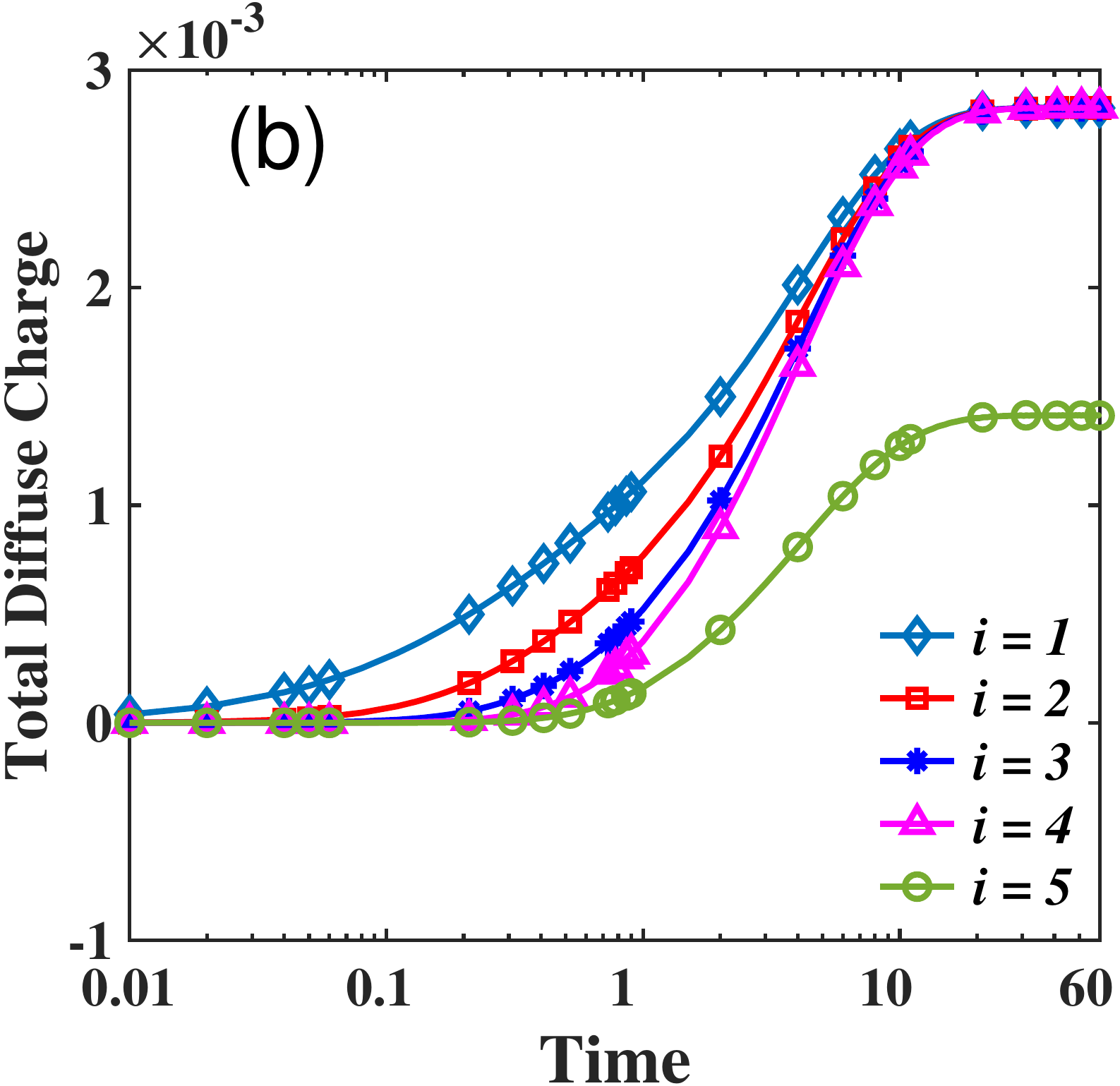}
\caption{Symmetric salt. Snapshots of potential distributions ($a$) and evolution of total diffuse charge next to the $i$th electrode ($b$), obtained by the leading-order asymptotic solution.  
}
\label{fig:symmetric}
\end{figure}

The accurate leading-order asymptotic approximation is used to study the charging dynamics by analyzing the evolution of electrostatic potential and total diffuse charge next to each electrode. For the symmetric case, the left half of the system is present in Fig.~\ref{fig:symmetric}. Panel (a) shows that the electrostatic potential is screened during the charging process and relaxes to zero in the bulk. Clearly, one can observe linear potential in the bulk at different times, which gets drastically screened by oppositely attracted ions in boundary layers, as revealed in our matched asymptotic analysis. As the system approaches equilibrium, the slope of the linear electrostatic potential between electrodes, i.e., $j_0(t)$, vanishes gradually. The zero potential in the bulk also indicates that function $A(t)$ eventually approaches zero in such a symmetric system. Panel (b) depicts the total diffuse charge next to each electrode. As charging proceeds, the diffuse charge increases quickly from zero to a uniform saturation value, except the outmost electrode ($i=5$), whose saturation value is only one half of that of the other electrodes. This is explained by the fact that the outmost electrode is only charged at one side. It is of interest to see that the electrodes closer to the center get charged up faster, because the ions in the center penetrate gradually into the stack electrodes in the charging process.

\begin{figure}[H]
  \centering
  \includegraphics[width=0.305\textwidth]{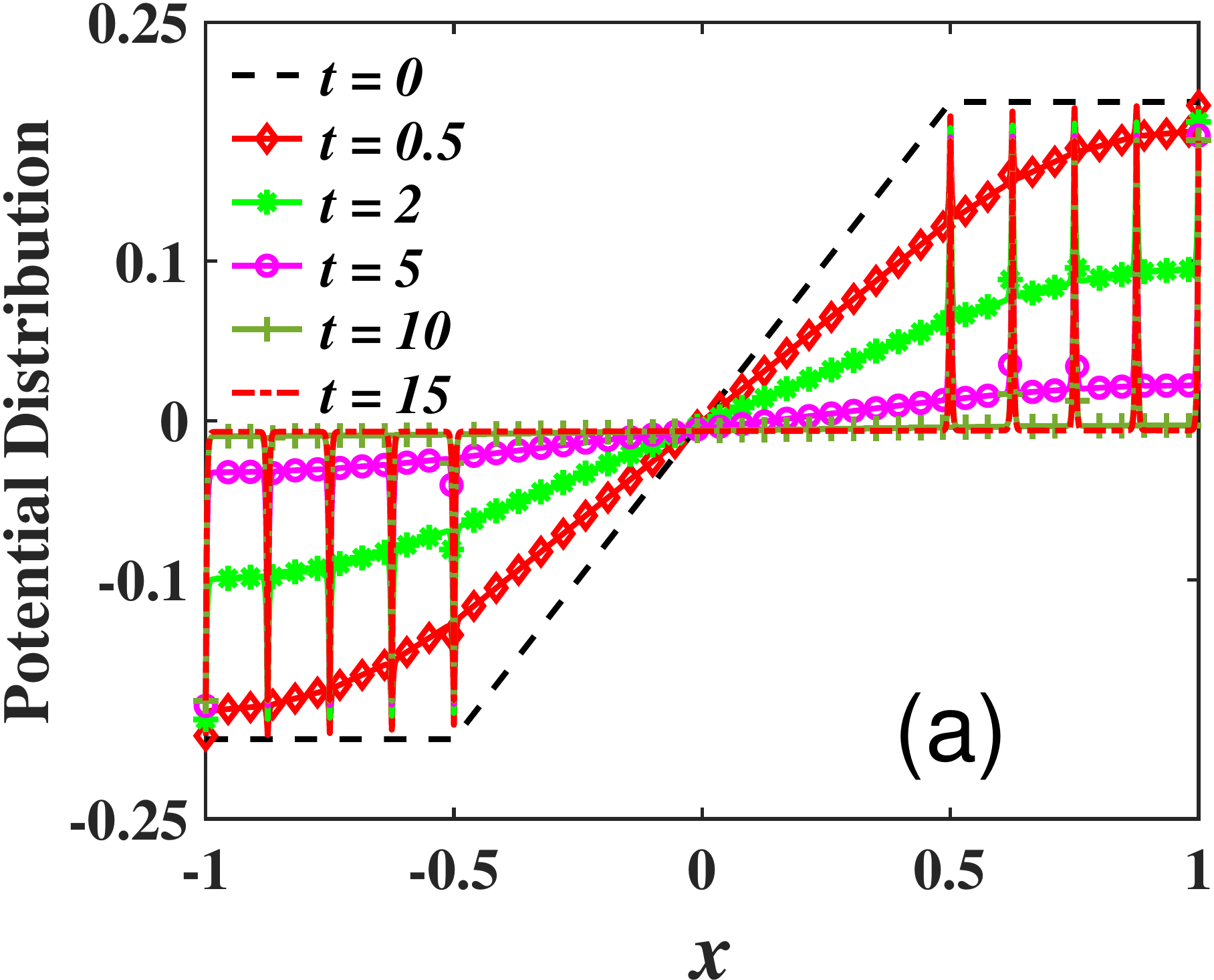}
\includegraphics[width=0.3\textwidth]{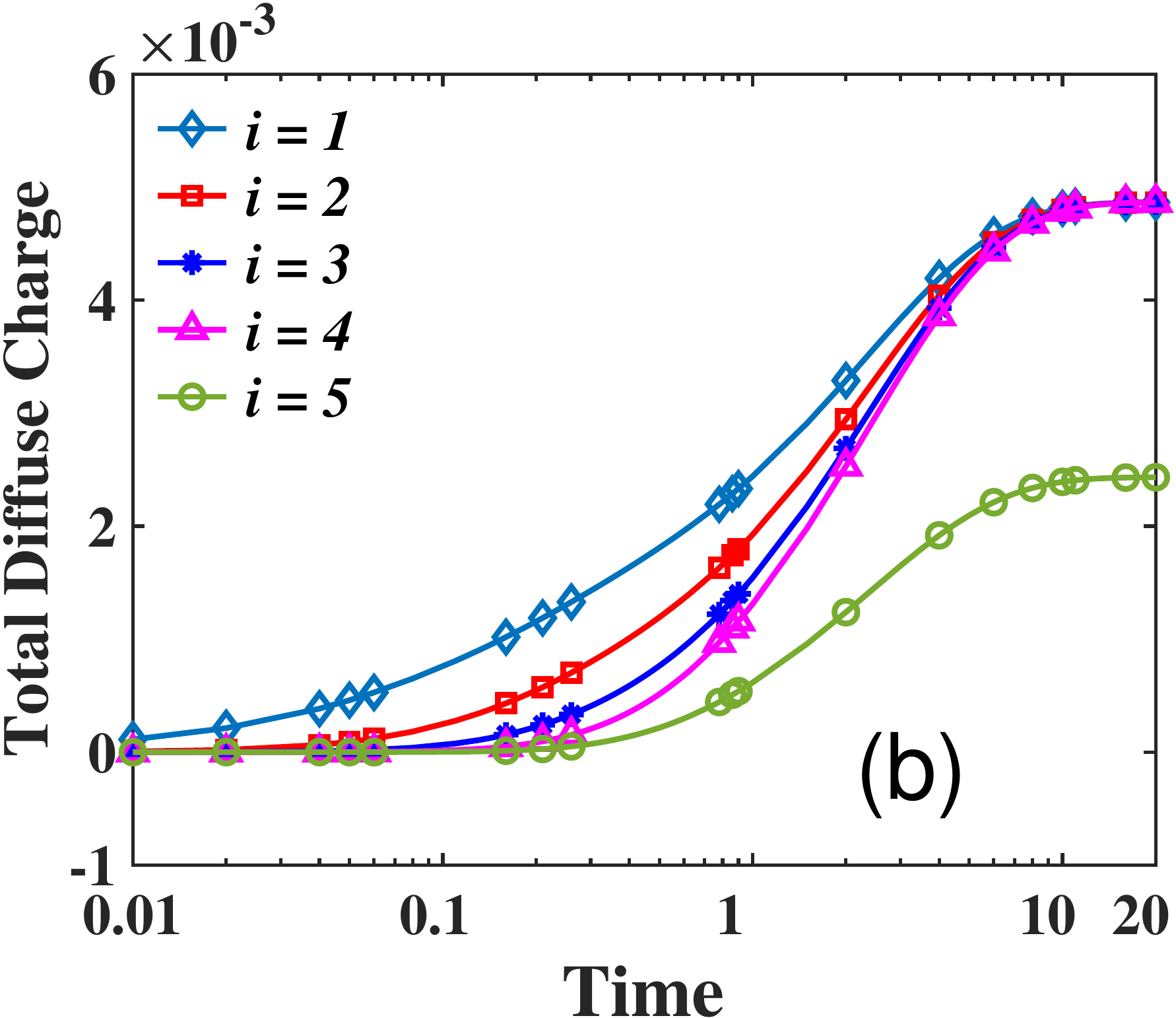}
\includegraphics[width=0.3\textwidth]{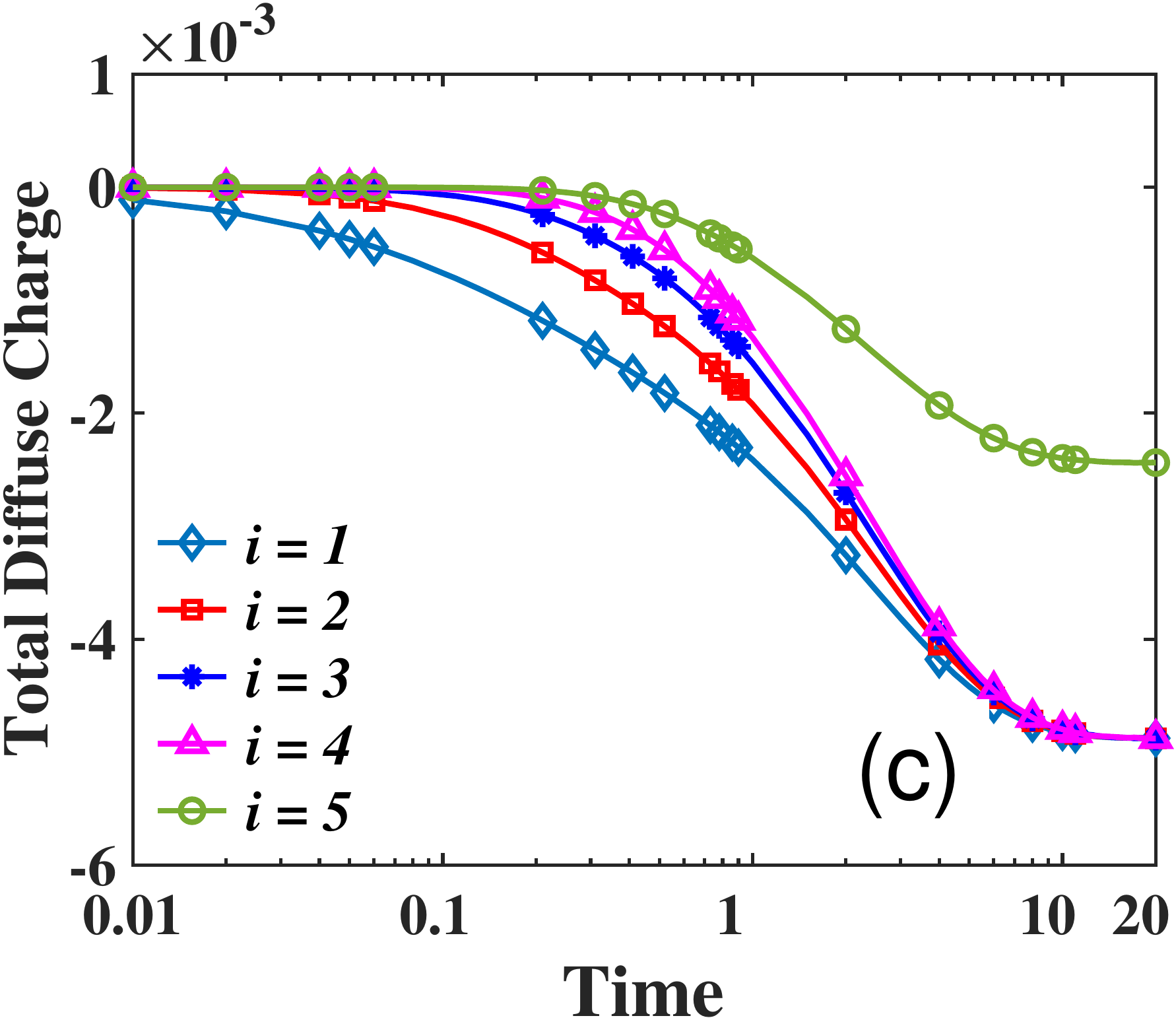}
\caption{Asymmetric salt. Snapshots of potential distributions ($a$), evolution of total diffuse charge next to the $i$th electrode in the left half ($b$) and right half ($c$), obtained by the leading-order asymptotic solution.   
}
\label{fig:asymmetric}
\end{figure}

We next consider the charging dynamics of asymmetric cases. Fig.~\ref{fig:asymmetric} displays the evolution of the electrostatic potential and total diffuse charge at the left and right half of the system. Again, one can observe that the potential keeps linear in the bulk and gets screened quickly by counterions. The slope of potential between electrodes, $j_0(t)$, also vanishes gradually as the system approaches equilibrium, whereas the function $A(t)$ that accounts for the asymmetry does not vanish eventually as that in the symmetric case. In addition, it is noticed that saturation value of total diffuse charge at electrodes becomes larger than the symmetric case, indicating that larger differential capacitance can be achieved in asymmetric cases.  Furthermore,  the asymmetric system reaches the equilibrium faster than the symmetric case due to divalent ions with higher ionic valence.  

\subsection{Generalized RC timescale}
\begin{figure}[htbp]
 \centering
\includegraphics[width=0.4\textwidth]{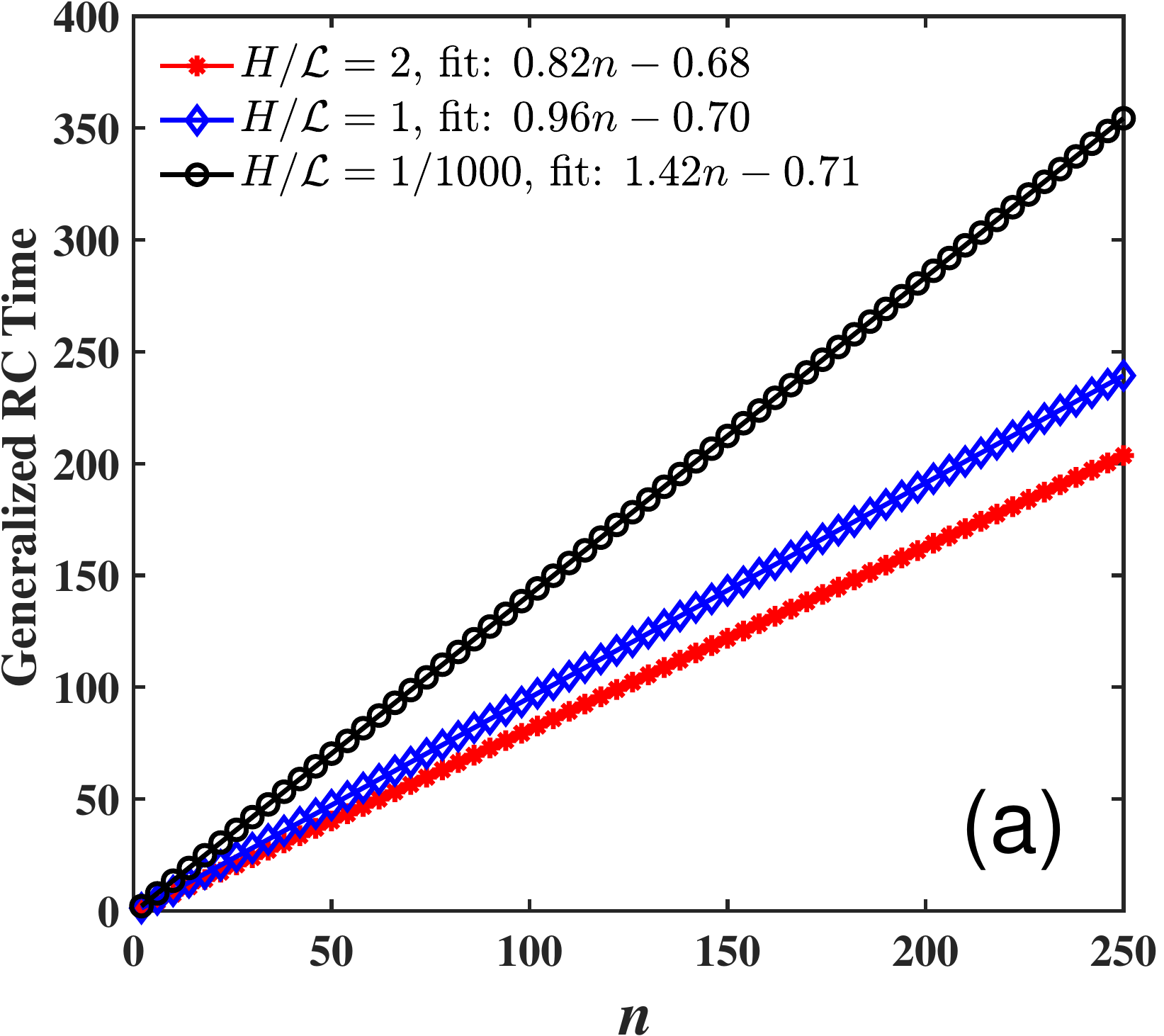}
\includegraphics[width=0.4\textwidth]{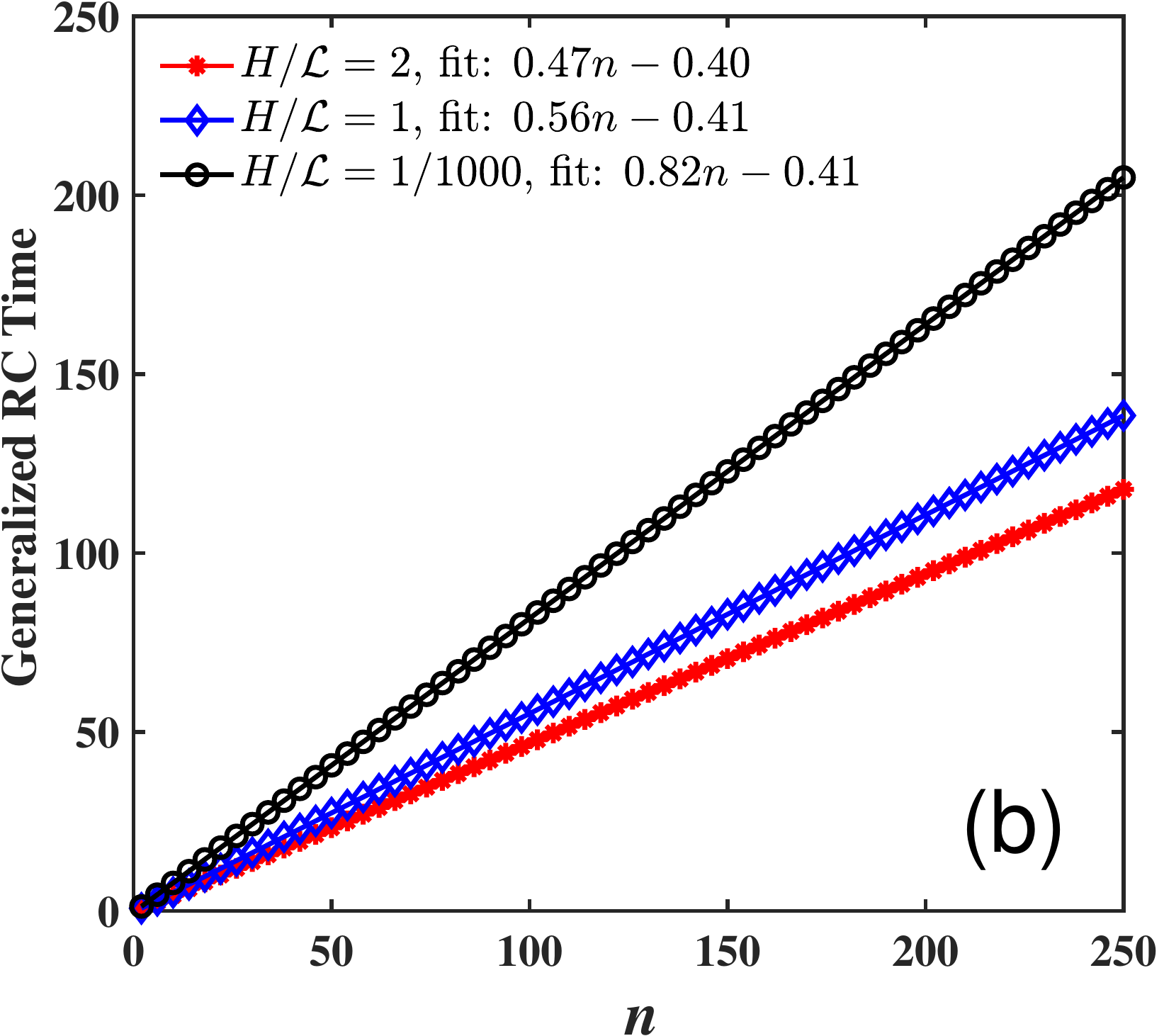}
\caption{{The charging timescale $\tau_n$ against $n$ of the $2n$ stack-electrode model for the symmetric salt (a) and the asymmetric salt (b) with different $H/\calL$.}}
\label{fig:eigenvalue}
\end{figure}

From the charging dynamics shown in Fig.~\ref{fig:symmetric}, one can know that the charging process for each electrode is different but they reach the equilibrium almost at the same time. Timescale analysis in Theorem \ref{thm:generalized RCtime} reveals that there is a generalized charging timescale $\tau_n$.
We postulate that the eigenvalues depend on the value of $n$, $h = H/(n-1)$, and $H/\calL$. We  perform a series of simulations with an applied potential $V_\pm =\pm 0.2$ and different combinations of the parameters $n$, $H$, and $H/\calL$. Consider a symmetric system with $z_+=1$ and $z_-=-1$. We numerically calculate $\lambda_c$ and show the dependence of $\tau_n$ on $n$ in Fig.~\ref{fig:eigenvalue} (a), where a clear linear dependence can be seen. By fitting, one finds that 
\begin{equation}
\tau_n = A(H) n+ B(H),
\end{equation}
where the coefficients $A$ and $B$ are functions of $H$. 
The linear dependence of $\tau_n$ on $n$ demonstrates that the multiple stack-electrode model has a generalized RC timescale that grows linearly with the number of stacks. {Our results are consistent with those in \cite{janssen2021transmission} and \cite{lian2020blessing}, where the authors have derived a formula for the generalized RC time.} Since the computational domain has been rescaled to $[-1, 1]$, smaller $H$ means a narrower electrode region. As $n$ increases, the porosity of a system with a smaller $H$ increases faster. This explains the higher slopes $A(H)$ observed in Fig.~\ref{fig:eigenvalue} (a) with a smaller $H$ value.  To mimic the porous electrodes, the number of stacks could be large and corresponding the charging timescale can be much larger.  This offers an explanation of timescale gap of several orders of magnitude between experimental data and theoretical models for supercapacitors described by two parallel plates.

For the asymmetric case, a series of simulations with the same parameters except $z_+=2$ and $z_-=-1$ are performed. The generalized charging timescale is again calculated by $\tau_n=1/\lambda_c$. Fig.~\ref{fig:eigenvalue} (b) shows that the charging timescale $\tau_n$ again scales linearly with the number of stacks. 
In contrast to the symmetric case,  one can find that the slope of the linear functions is smaller than that of the symmetric case. This further confirms that the charging timescale of the system is smaller with an asymmetric electrolyte.

\subsection{Diffusion timescale}

\begin{figure}[H]
  \centering
\includegraphics[scale=0.42]
{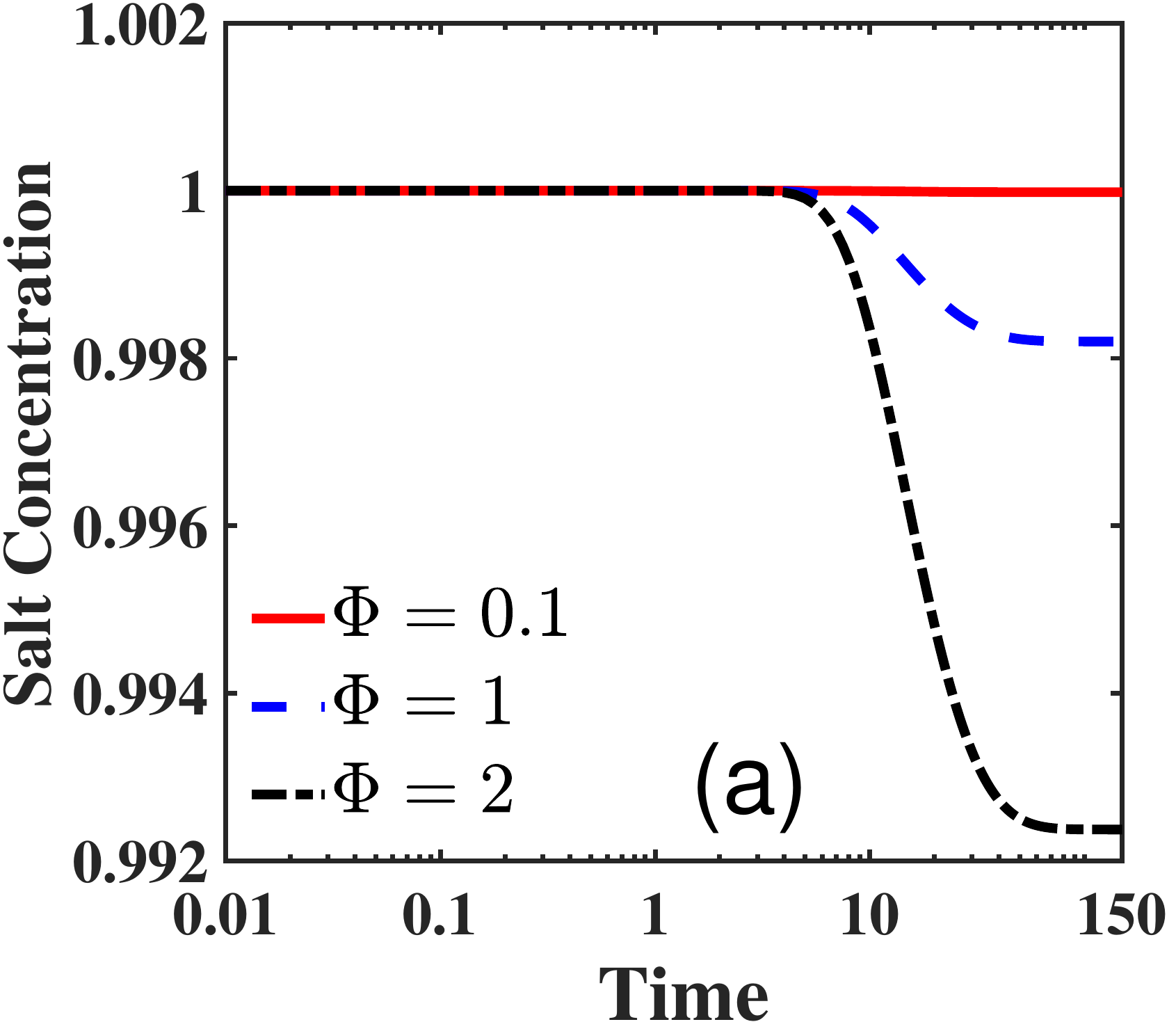}
\includegraphics[scale=0.415]{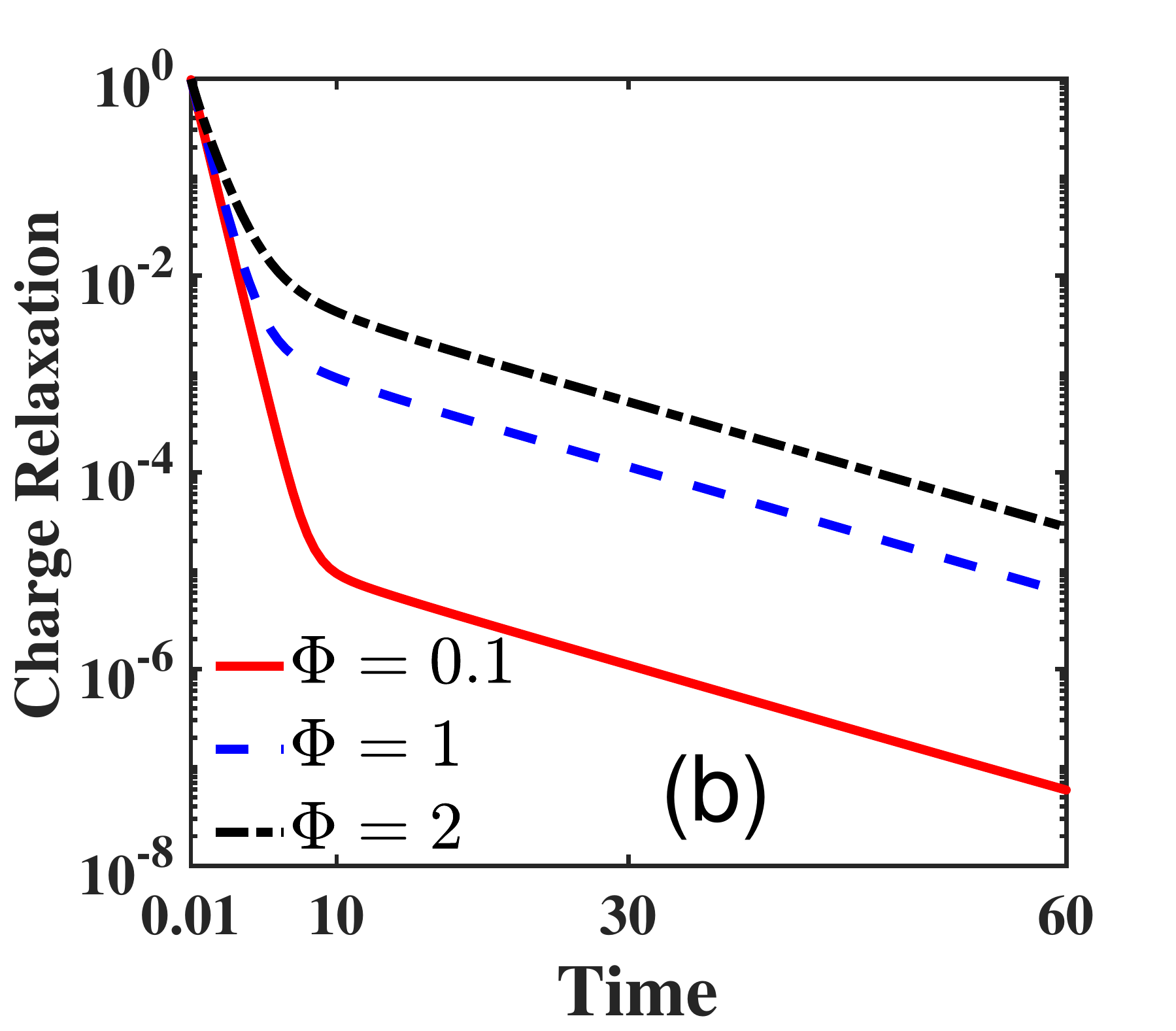}\\
\hspace{-0.5cm}
\includegraphics[scale=0.42]{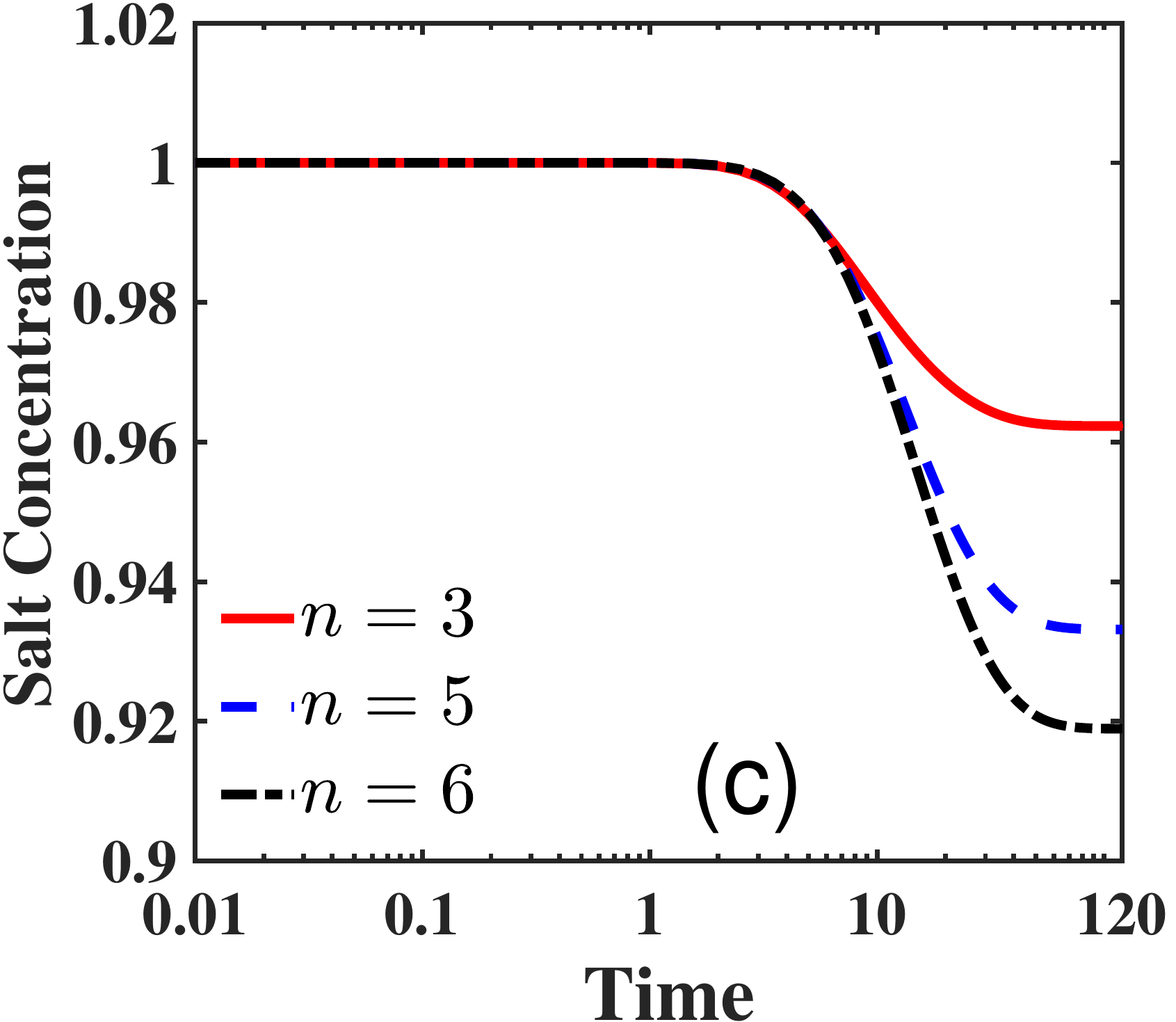}
\includegraphics[scale=0.42]{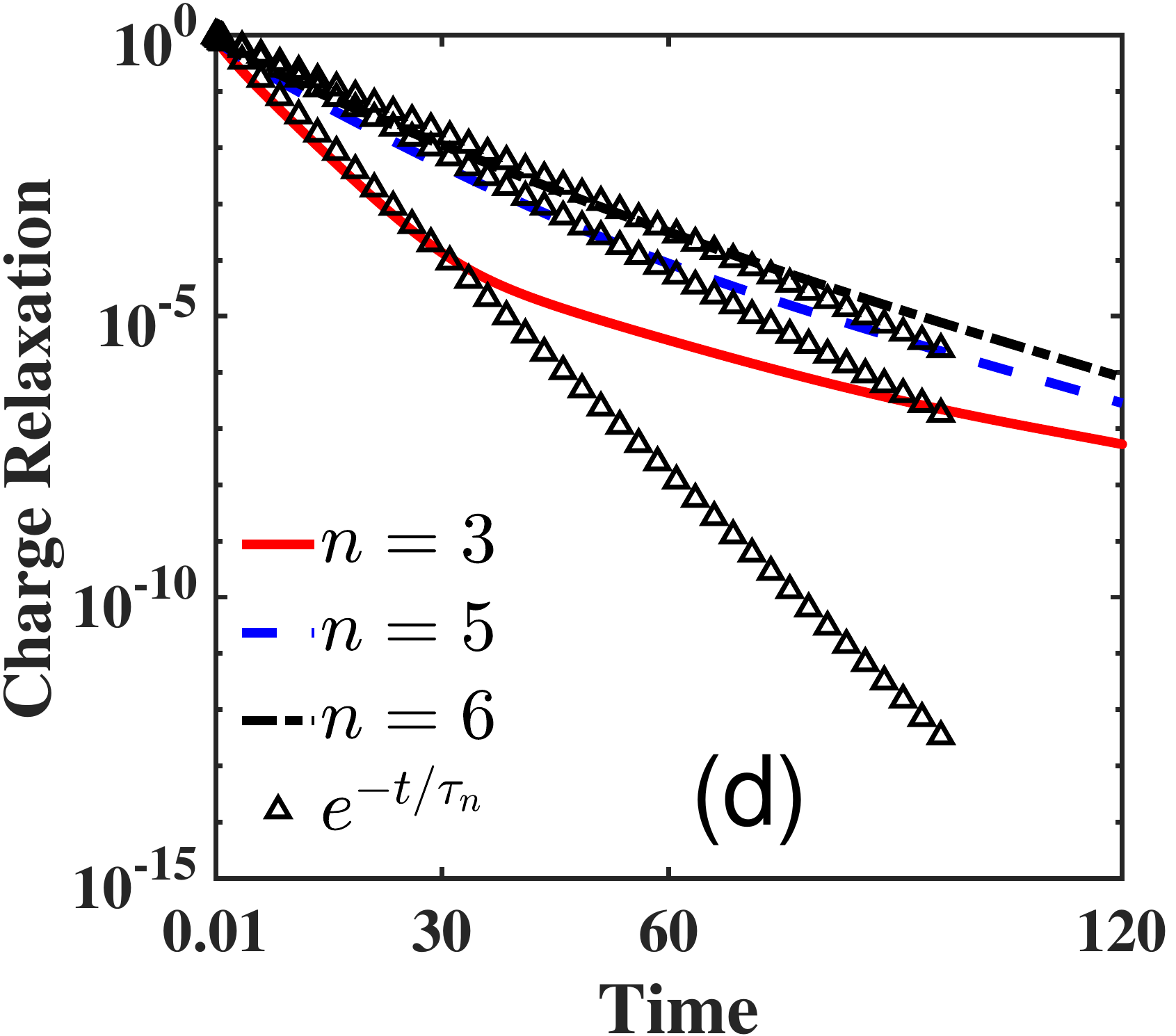}
\caption{(a) The salt concentration at the center for $n=1$; (b) The log-plot of the charge relaxation $1-Q/Q_{eq}$ under different applied potentials: $V_\pm =\pm 0.1, \pm 1, \pm 2$ for $n=1$;  (c) The salt concentration at the center for $n=\{3,5,6\}$; (d) The log-plot of the charge relaxation with $V_\pm =\pm 2$ for $n=\{3,5,6\}$.}
\label{fig:symmetric:diffuse}
\end{figure}
Analysis in Section~\ref{sec:diffusiontime} indicates that, for large applied voltages, there exists a longer diffusion timescale $\overline{\tau}=\tau_c/\epsilon = D^2/\mathcal{D}_0$, which is in dependent of $n$ but depends on the system size $D$. Here, we carry out simulations to demonstrate the existence of two timescales: diffusion timescale $\overline{\tau}$ and the generalized RC charging timescale $\tau_n$ studied above.

Consider a symmetric system ($z_\pm =\pm 1$) first with $n=1$, i.e., $H=0$ and $\mathcal{L}=1$.
The charge relaxation $1-Q/Q_{eq}$ close to the electrode at $x=-\mathcal{L}$ and the salt concentration $c =(z_+c_+(0,t)-z_-c_-(0,t))/(z_+ - z_-)$ at $x=0$ are investigated with three different applied potentials: $V_\pm =\pm 0.1, \pm 1, \pm 2$. It is of interest to observe from Fig.~\ref{fig:symmetric:diffuse} (a) that, only under large applied voltages, the salt concentration in the bulk can be much depleted in the second phase of the charging process.  The curves of charge relaxation in the logarithmic scale in Fig.~\ref{fig:symmetric:diffuse} (b) indicate that there are two distinct relaxation phases with two separate timescales, which have been investigated in~\cite{BTA:PRE:04}. The first one is the generalized RC charging timescale and the second one is the diffusion timescale.  Fig.~\ref{fig:symmetric:diffuse} (c) displays salt concentration at center for $n=\{3,5,6\}$ with $V_\pm =\pm 2$, $H =0.5$, and $L=0.5$. Two distinct relaxation phases are again observed. In the last plot of Fig. \ref{fig:symmetric:diffuse}, the reference curves $\text{exp}(-t/\tau_n )$ with $n=3$, $5$, $6$, are display, where $\tau_n$ is predicted by the asymptotic analysis. Obviously, one can find that the reference curves are consistent with the first phase of the curves for the total diffuse charge, indicating that the generalized RC timescale $\tau_n$ found by our asymptotic analysis works well in the first phase of charging. As $n$ increases, it is seen that the two timescales tend to be the same, being consistent with~\cite{lian2020blessing}.

\section{Conclusion}
\label{sec:conclusion}

In this work, asymptotic analysis on the ion concentrations, potential distributions, and total diffuse charge has been performed for stack-electrode supercapactiors. 
The asymptotic analysis derives a generalized equivalent circuit model for the zeta potentials at all stacks, which takes potential-dependent nonlinear capacitance and resistance determined by the PNP theory and physical parameters of electrolytes, e.g., specific counterion valences for asymmetric electrolytes.
Resorting to the developed linearized stability analysis, projection technique, and asymptotic analysis on excess salt concentration, the biexponential charging timescales have been analytically studied for two-ion system.
The validity of the asymptotic solution has been verified by a series of numerical tests. Further numerical investigations on the charging timescale have demonstrated that our generalized equivalent circuit model, as well as companion linearized stability analysis, can faithfully capture the charging dynamics of symmetric/asymmetric electrolytes in supercapacitors with porous electrodes.

The MAE method for stack-electrode supercapactiors can be possibly extended to derive reduced models for the description of ionic correlation effects~\cite{BSK:PRL:2011}, reversible and irreversible heat generation in porous electrodes~\cite{JiLiuLiuZhou_JPS22}, and large capacitance of double layers formed at metal electrodes due to image-charge attractions~\cite{SLS:PRL:2010, ji2018asymptotic}. Linearized stability analysis on reduced models could elucidate the impact of structure porosity on the charging dynamics in supercapactiors with porous electrodes.

\section*{Data accessibility}

Software code to reproduce the data in the paper is available at \url{https://github.com/LijieJi6/AsymptoticAnaStackElectro}.

\section*{Authors' contributions}
Lijie Ji: Conceptualization, Methodology, Formal analysis, Software, Writing - Original Draft, Writing - Review and Editing, Funding acquisition.
Zhenli Xu: Conceptualization, Methodology, Formal analysis, Supervision, Writing - Original Draft, Writing - Review and Editing, Funding acquisition, Project administration.
Shenggao Zhou: Conceptualization, Methodology, Formal analysis, Supervision, Writing - Original Draft, Writing - Review and Editing, Funding acquisition, Project administration.
All authors have read and approved the manuscript. All authors agree to be accountable for all aspects of the work.

\section*{Competing interests}
We declare we have no competing interests.

\section*{Funding}
The authors are funded by China Postdoctoral Science Foundation (No. 2021M702141), the NSFC (grant Nos. 12201403, 12071288 and 12171319), the Science and Technology Commission of Shanghai Municipality (grant Nos. 20JC1414100 and 21JC1403700), and the Strategic Priority Research Program of CAS (grant No. XDA25010403). This work is also supported by the HPC center of Shanghai Jiao Tong University.

\bibliographystyle{abbrv}
\bibliography{MultiElectrode}

\end{document}